\theoremstyle{plain}
\newtheorem{theorem}{Theorem}[section] 
\newtheorem{proposition}[theorem]{Proposition} 
\theoremstyle{plain} 
\theoremstyle{definition} 
\title{Data-Assisted Non-Intrusive Model Reduction for Forced Nonlinear Finite Elements Models}
\author{M. Cenedese$^{1}$, J. Marconi$^{2}$, G. Haller$^{1}$ and S. Jain$^{3}$
\vspace{3.5mm}\\
$^{1}$Institute for Mechanical Systems, ETH Z\"urich\\ Leonhardstrasse 21, 8092 Z\"urich, Switzerland \vspace{1.5mm}\\
$^{2}$Department of Mechanical Engineering,  Politecnico di Milano \\ Via La Masa 1, 20156 Milan, Italy
\vspace{1.5mm}\\ \thanks{Corresponding author: \href{mailto:shobhit.jain@tudelft.nl}{Shobhit.Jain@tudelft.nl}}
$^{3}$ Delft Institute of Applied Mathematics, TU Delft,\\ Mekelweg 4, 2628 CD Delft, The Netherlands}
\date{\today}
\begin{document}
\maketitle
\begin{abstract}
	\noindent Spectral submanifolds (SSMs) have emerged as accurate and predictive model reduction tools for dynamical systems defined either by equations or data sets. While finite-elements (FE) models belong to the equation-based class of problems, their implementations in commercial solvers do not generally provide information on the nonlinearities required for the analytical construction of SSMs. Here, we overcome this limitation by developing a data-driven construction of SSM-reduced models from a small number of unforced FE simulations. We then use these models to predict the forced response of the FE model without performing any costly forced simulation. This approach yields accurate forced response predictions even in the presence of internal resonances or quasi-periodic forcing, as we illustrate on several FE models. Our examples range from simple structures, such as beams and shells, to more complex geometries, such as a micro-resonator model containing more than a million degrees of freedom. In the latter case, our algorithm predicts accurate forced response curves in a small fraction of the time it takes to verify just a few points on those curves by simulating the full forced-response.
\end{abstract}

\section{Introduction}
Finite element (FE) models are an invaluable tool for scientific and engineering purposes. In most industrial applications, however, FE simulations carry prohibitive computational costs. Even using dedicated commercial software, one faces major computational hurdles in predicting time-dependent response of lightly damped, nonlinear mechanical systems. To facilitate fast forced response simulations of industrial-scale mechanical structures, we will construct these nonlinear reduced-order models based on the theory of Spectral Submanifolds (SSMs) and using unforced simulation data obtained from generic FE software. 

Many model reduction techniques targeting nonlinear mechanical systems have appeared in the literature. Some methods, such as static condensation~\cite{Tiso2021}, modal truncation~\cite{GR2015,Touze2014} and  modal derivatives~\cite{Idelsohn1985a,Weeger2016,Jain2017}, fall into the category of intrusive (or direct) methods, because they require explicit access to the governing equations or the source code of an FE software to construct reduced order models (ROMs). Nonintrusive techniques~\cite{Mignolet2013}, on the other hand, have greater accessibility since they use the FE software as a black box to construct ROMs. Several data-driven techniques, including Proper Orthogonal Decomposition~\cite{Lu2019,Carlberg2011} and its deep learning-based applications~\cite{Champion2019,Gobat2023} fall into this category. In the FE context, such data-driven techniques rely on expensive full-system simulations to generate training data and have limited applicability outside the training range. Other nonintrusive techniques, such as the stiffness evaluation procedure (STEP)~\cite{Muravyov2003} and its enhancements~\cite{Perez2014,Karamooz2021}, avoid the use of expensive full trajectory simulations to obtain ROMs.

A common feature of all the above techniques is that they are projection based. Projection-based methods exhibit an inherently linear perspective on model reduction that loses mathematical justification for nonlinear systems~\cite{Haller2017}. An emerging alternative in nonlinear model reduction is the use of attracting invariant manifolds. The reduced dynamics on these low-dimensional manifolds attracts the full system’s trajectories and hence provides a mathematically rigorous ROM. Prominent examples of this approach are spectral submanifolds (SSMs)~\cite{Haller2016}, which are the smoothest nonlinear continuations of linear modal subspaces. The existence and uniqueness of SSMs are guaranteed under appropriate nonresonance conditions on the spectrum of the linearized system~\cite{Haller2016}. Indeed, these conditions can be verified via any FE package, as we will demonstrate. 

Intrusive computation of SSMs has been successfully employed to reduce various FE models of nonlinear mechanical systems~\cite{Ponsioen2018,Ponsioen2020,Jain2021,Vizzaccaro2022}, including those featuring internal resonances~\cite{Li2022a,Li2022b,Opreni2023} and parametric resonances~\cite{Thurnher2023}. More recently, the data-driven computation of SSMs has been developed~\cite{Cenedese2022a} and disseminated in the open-source packages, \texttt{SSMLearn}~\cite{SSMLearn} and \texttt{fastSSM}~\cite{Axas2022}. These developments have led to the notion of dynamics-based machine learning for nonlinearizable phenomena~\cite{Haller2022} in diverse application fields, including fluid dynamics~\cite{Kaszas2022,Haller2023} and controls~\cite{Mahlknecht2022, Alora2023}.

Motivated by the data-driven efforts of SSM computation, we aim to develop here an SSM-based nonintrusive technique for nonlinear model reduction. To this end, we will use the eigenvalues and eigenvectors of the linearized system that are provided by any generic FE solver. This linear information will be used to determine the dimension of the SSM relevant for model reduction as well as the linear part of the expansions for the SSM and its reduced dynamics. We will then perform transient simulations of the unforced mechanical structure using generic FE solvers and use the trajectory data to learn the nonlinear part of the SSM parametrization. While our SSM-based ROM will be constructed based on unforced system simulations, we will show how it yields nonlinear forced response predictions for the full system. This justifies the cost of full, unforced system simulations, which would otherwise be seen as a potentially expensive offline cost for the construction of our ROM. Furthermore, we will show that our methodology is also effective in the reduced modeling of internally resonant systems.

The remainder of this paper is organized as follows. In the next section, we define the general setup for mechanical systems and SSM-based model reduction. In Section~\ref{sec:learning}, we discuss how to learn SSMs and their reduced dynamics based on linearized system information and decaying (unforced) trajectory data. We also show how the effect of external forcing can be systematically included in SSM-based ROMs to make forced response predictions. Finally, in Section~\ref{sec:examples}, we demonstrate our methodology on FE models of one-dimensional beam structures, two-dimensional shell structures, and a three-dimensional continuum-based MEMS resonator. These examples vary in their numbers of degrees of freedom from a few hundred to more than a million. With these examples, we aim to demonstrate the data-assisted, SSM-based prediction of several nonlinearizable phenomena, which include multiple coexisting steady states and nonlinear modal interactions in internally resonant systems.

\section{Setup}
FE models for mechanics problems comprise a system of second-order ordinary differential equations for generalized displacements $\mathbf{q}(t)\in\mathbb{R}^n$ in the form
\begin{equation}
\label{eq:mechsys}
\mathbf{M}\ddot{\mathbf{q}} +  \mathbf{C}\dot{\mathbf{q}} + \mathbf{K}\mathbf{q} + \mathbf{f}^{\mathrm{int}}(\mathbf{q},\dot{\mathbf{q}}) = \varepsilon\mathbf{f}^{\mathrm{ext}}(\mathbf{q},\dot{\mathbf{q}},\boldsymbol{\Omega}t;\varepsilon),
\end{equation}
where, $\mathbf{M},\mathbf{C},\mathbf{K}\in\mathbb{R}^{n\times n}$ are the mass, damping and stiffness matrices; $\mathbf{f}^{\mathrm{int}}(\mathbf{q},\dot{\mathbf{q}})\in\mathbb{R}^n$ is the purely nonlinear internal force; and $\mathbf{f}^{\mathrm{ext}}(\mathbf{q},\dot{\mathbf{q}},\boldsymbol{\Omega}t;\varepsilon)\in\mathbb{R}^n$ is the external force with frequency vector $\boldsymbol{\Omega}\in\mathbb{R}^l$, whose amplitude is governed by the small parameter $\varepsilon > 0$. We assume that internal and external forces are smooth, and that the latter can be written in terms of its Taylor-Fourier expansion as
\begin{equation}
\label{eq:forcing_definition}
\mathbf{f}^{\mathrm{ext}}(\mathbf{q},\dot{\mathbf{q}},\boldsymbol{\Omega}t;\varepsilon)= \sum_{\mathbf{k}\in\mathbb{Z}^{l}}\mathbf{f}_{\mathbf{k}}^{\mathrm{ext}}e^{i\langle\mathbf{k},\boldsymbol{\Omega}\rangle t}+\mathcal{O}(\varepsilon\|(\mathbf{q},\dot{\mathbf{q}})\|), \qquad \mathbf{f}_{\mathbf{k}}^{\mathrm{ext}}\in\mathbb{C}^{n}, \qquad \mathbf{f}_{-\mathbf{k}}^{\mathrm{ext}}=\bar{\mathbf{f}}_{\mathbf{k}}^{\mathrm{ext}}.
\end{equation}
This forced may be autonomous (when $l=0$), periodic or quasi-periodic in $t$, depending on whether the frequencies in $\boldsymbol{\Omega}$ are rationally commensurate or not. In Eq. (\ref{eq:forcing_definition}), we denote the complex conjugate of a vector $\mathbf{z}\in\mathbb{C}^{n}$ as $\bar{\mathbf{z}}\in\mathbb{C}^{n}$.

We write system (\ref{eq:mechsys}) in a first-order form with the state vector $\mathbf{x} = (\mathbf{q},\dot{\mathbf{q}})\in\mathbb{R}^{2n}$ as
\begin{equation}
\label{eq:dynsys}
\begin{array}{c}
\dot{\mathbf{x}} = \mathbf{f}(\mathbf{x},\boldsymbol{\Omega}t;\varepsilon), \qquad \mathbf{f}(\mathbf{x},\boldsymbol{\Omega}t;\varepsilon) = \mathbf{A}\mathbf{x} + \mathbf{f}_0(\mathbf{x}) + \varepsilon\mathbf{f}_{1}(\mathbf{x},\boldsymbol{\Omega}t;\varepsilon), \\ \\ \mathbf{A}=\begin{bmatrix} \mathbf{0} & \mathbf{I} \\ -\mathbf{M}^{-1}\mathbf{K}& -\mathbf{M}^{-1}\mathbf{C} \end{bmatrix}, \,\,\,\,  \mathbf{f}_0(\mathbf{x})=\begin{pmatrix} \mathbf{0}  \\ -\mathbf{M}^{-1}\mathbf{f}^{\mathrm{int}}(\mathbf{x}) \end{pmatrix}, \,\,\,\,  \mathbf{f}_1(\mathbf{x},\boldsymbol{\Omega}t;\varepsilon)=\begin{pmatrix} \mathbf{0}  \\ \mathbf{M}^{-1}\mathbf{f}^{\mathrm{ext}}(\mathbf{x},\boldsymbol{\Omega}t;\varepsilon) \end{pmatrix},
\end{array}
\end{equation}
and we denote its trajectories starting from the initial condition $\mathbf{x}_0$ as $\mathbf{x}(t;\mathbf{x}_0,\varepsilon)$. 

We also assume that the origin is an equilibrium for system (\ref{eq:dynsys}) when $\varepsilon = 0$, and that the matrix $\mathbf{A}$ is a semi-simple matrix featuring $2n$ eigenvalues with negative real parts. As we focus on oscillatory motions, we further assume that $\mathbf{A}$ has $c\leq n$ complex conjugate pairs of eigenvalues $\lambda_1,\bar{\lambda}_1,\lambda_2,\bar{\lambda}_2,...,\lambda_{c},\bar{\lambda}_{c}$ ordered with non-increasing real parts, and we denote by $E_1,E_2,...,E_{c}$ the corresponding two-dimensional eigenspaces (or modal subspaces). We define a $2m$-dimensional (oscillatory) spectral subspace $E^{2m}$ as the direct sum of $m$ of these modal subspaces, i.e., $E^{2m} = E_{j_1} \oplus E_{j_2} \oplus ... ,\oplus E_{j_m}$, and we denote $\mathrm{spec}\left(\mathbf{A}\vert_{E^{2m}} \right) $ the set of eigenvalues related to this spectral subspace, i.e., $\mathrm{spec}\left(\mathbf{A}\vert_{E^{2m}} \right)=\left\{\lambda_{j_1},\bar{\lambda}_{j_1},\lambda_{j_2},\bar{\lambda}_{j_2},...,\lambda_{j_m},\bar{\lambda}_{j_m}\right\}$.

We recall that spectral subspaces are invariant for the linearization of system (\ref{eq:dynsys}) and that slow spectral subspaces, i.e., related to the eigenvalues with the largest real parts, are also attracting. If $2m$ slowest eigenvalues are $m$ complex conjugate pairs, then the $2m$ slow spectral subspace is $E^{2m}_S = E_{1} \oplus E_{2} \oplus ... ,\oplus E_{m}$. A generic spectral subspace $E^{2m}$ is spanned by the columns of the matrix of eigenvectors $\mathbf{V}_{E^{2m}}\in\mathbb{C}^{2n\times 2m}$, satisfying the eigenvalue problem $\mathbf{A}\mathbf{V}_{E^{2m}} = \mathbf{V}_{E^{2m}} \mathbf{R}_{E^{2m}}$ where $\mathbf{R}_{E^{2m}}\in\mathbb{C}^{2m\times 2m}$ is the diagonal matrix whose elements are those of $\mathrm{spec}\left(\mathbf{A}\vert_{E^{2m}} \right)$. We will also need the matrix $\mathbf{W}_{E^{2m}}\in\mathbb{C}^{2m\times 2n}$ the matrix satisfying the dual problem $\mathbf{W}_{E^{2m}}\mathbf{A} = \mathbf{R}_{E^{2m}}\mathbf{W}_{E^{2m}}$, and normalized such that $\mathbf{W}_{E^{2m}}\mathbf{V}_{E^{2m}} = \mathbf{I}$. We finally introduce $\mathbf{U}_0=[\mathbf{u}_{j_1}\,\,\mathbf{u}_{j_2}\,\,...\,\, \mathbf{u}_{j_m}]\in\mathbb{R}^{n\times m}$ as the matrix whose columns are the mode shapes $\mathbf{u}_j$, normalized by the mass, i.e., $\mathbf{U}_0^\top \mathbf{M} \mathbf{U}_0 = \mathbf{I}$. We have hence $\mathbf{K}\mathbf{U}_0 = \mathbf{M} \mathbf{U}_0\boldsymbol{\omega}^2_0$, where $\boldsymbol{\omega}^2_0$ is the diagonal matrix of the $m$ linear conservative natural frequencies $\omega_{0,j_1} ,\omega_{0,j_2},...\omega_{0,j_m}$.

\subsection{Spectral submanifolds and their properties}
\label{sec:ssms}
If the spectral subspace $E^{2m}$ is non-resonant (i.e., no nonnegative, low-order, integer linear combination of the spectrum of $\mathbf{A}\vert_{E^{2m}}$ is contained in the spectrum of $\mathbf{A}$ outside $E^{2m}$), then $E^{2m}$ has infinitely many nonlinear continuations in system (\ref{eq:dynsys}) for $\varepsilon$ small enough \cite{Haller2016}. These continuations are invariant manifolds of dimension $2m+l$. They are also tangent to $E^{2m}$ for $\varepsilon=0$ and have have the same quasiperiodic time dependence as $\mathbf{f}^{\mathrm{ext}}$. Out of all these invariant manifolds, we call the smoothest one the (primary) spectral submanifold (SSM) of $E^{2m}$, denoted as $\mathcal{W}_\varepsilon(E^{2m})$. For more details on the remaining, less smooth (or secondary) SSMs tangent to $E^{2m}$ for $\varepsilon=0$, see \cite{Haller2023}. In this paper, we will simply refer to the primary SSM as the "SSM" for simplifying our discussion.

An SSM-based ROM involves the descriptions of the SSM geometry in the phase space and its reduced dynamics. For the geometry, a pair of smooth, invertible maps are needed: the coordinate chart $\mathbf{y} = \mathbf{w}(\mathbf{x},\boldsymbol{\Omega}t;\varepsilon)$, which uniquely maps a state $\mathbf{x}\in\mathcal{W}_\varepsilon(E^{2m})$ into either $2m$ real reduced coordinates or $m$ complex conjugate pairs $\mathbf{y}\in\mathbb{C}^{2m}$; and the parametrization $\mathbf{x} = \mathbf{v}(\mathbf{y},\boldsymbol{\Omega}t;\varepsilon)$, which retrieves the invariant manifold in the phase space from the reduced coordinates. These two maps satisfy the invertibility relations
\begin{equation}
\label{eq:invertibility}
\mathbf{y} = \mathbf{w}(\mathbf{v}(\mathbf{y},\boldsymbol{\Omega}t;\varepsilon),\boldsymbol{\Omega}t;\varepsilon), \qquad \mathbf{x} = \mathbf{v}(\mathbf{w}(\mathbf{x},\boldsymbol{\Omega}t;\varepsilon),\boldsymbol{\Omega}t;\varepsilon).
\end{equation}
The reduced dynamics on $\mathcal{W}_\varepsilon(E^{2m})$ is described by the vector field $\dot{\mathbf{y}} = \mathbf{r}(\mathbf{y},\boldsymbol{\Omega}t;\varepsilon)$. By the invariance of the manifolds (i.e., if $\mathbf{x}_0 \in \mathcal{W}_\varepsilon(E^{2m})$ then $\mathbf{x}(t;\mathbf{x}_0,\varepsilon) \in \mathcal{W}_\varepsilon(E^{2m}) \,\, \forall t$), the mappings $\mathbf{v},\mathbf{w},\mathbf{r},\mathbf{f}$ satisfy the invariance relations
\begin{equation}
\label{eq:invariance}
\begin{array}{l}
D_{\mathbf{y}}\mathbf{v}(\mathbf{y},\boldsymbol{\Omega}t;\varepsilon) \mathbf{r}(\mathbf{y},\boldsymbol{\Omega}t;\varepsilon) + D_{\boldsymbol{\Omega}t}\mathbf{v}(\mathbf{y},\boldsymbol{\Omega}t;\varepsilon) \boldsymbol{\Omega}= \mathbf{f}(\mathbf{v}(\mathbf{y},\boldsymbol{\Omega}t;\varepsilon),\boldsymbol{\Omega}t;\varepsilon), \\
D_{\mathbf{x}}\mathbf{w}(\mathbf{x},\boldsymbol{\Omega}t;\varepsilon) \mathbf{f}(\mathbf{x},\boldsymbol{\Omega}t;\varepsilon) + D_{\boldsymbol{\Omega}t}\mathbf{w}(\mathbf{x},\boldsymbol{\Omega}t;\varepsilon) \boldsymbol{\Omega}= \mathbf{r}(\mathbf{w}(\mathbf{x},\boldsymbol{\Omega}t;\varepsilon),\boldsymbol{\Omega}t;\varepsilon).
\end{array}
\end{equation}
By the smooth dependence of the SSM on $\varepsilon$ \cite{Ponsioen2020}, we can write the expansion
\begin{equation}
\label{eq:expansion}
\begin{array}{ll}
\mathbf{w}(\mathbf{x},\boldsymbol{\Omega}t;\varepsilon)=\mathbf{W}_{0}\mathbf{x} + \mathbf{w}_{\mathrm{nl}}(\mathbf{x})+\varepsilon\mathbf{w}_{1}(\boldsymbol{\Omega}t)+\mathcal{O}(\varepsilon\|\mathbf{x}\|), & \mathbf{W}_{0}\in\mathbb{C}^{2m\times 2n}, \\
\mathbf{v}(\mathbf{y},\boldsymbol{\Omega}t;\varepsilon)=\mathbf{V}_0\mathbf{y} + \mathbf{v}_{\mathrm{nl}}(\mathbf{y})+\varepsilon\mathbf{v}_{1}(\boldsymbol{\Omega}t)+\mathcal{O}(\varepsilon\|\mathbf{y}\|),  & \mathbf{V}_{0}\in\mathbb{C}^{2n\times 2m}, \\
\mathbf{r}(\mathbf{y},\boldsymbol{\Omega}t;\varepsilon)=\mathbf{R}_0\mathbf{y} + \mathbf{r}_{\mathrm{nl}}(\mathbf{y})+\varepsilon\mathbf{r}_{1}(\boldsymbol{\Omega}t)+\mathcal{O}(\varepsilon\|\mathbf{y}\|),  & \mathbf{R}_{0}\in\mathbb{C}^{2m\times 2m},
\end{array}
\end{equation}
where $\mathbf{w}_{\mathrm{nl}}(\mathbf{x})$, $\mathbf{v}_{\mathrm{nl}}(\mathbf{y})$, $\mathbf{r}_{\mathrm{nl}}(\mathbf{y})$ are purely nonlinear maps, and $\mathbf{w}_{1}(\boldsymbol{\Omega}t)$, $\mathbf{v}_{1}(\boldsymbol{\Omega}t)$, $\mathbf{r}_{1}(\boldsymbol{\Omega}t)$ are time-dependent vectors. By the definition of SSMs, we must have 
\begin{equation}
\label{eq:props}
\mathrm{range}\left(\mathbf{V}_{0} \right) = E^{2m}, \qquad  \mathrm{spec}\left(\mathbf{R}_{0} \right) = \mathrm{spec}\left(\mathbf{A}\vert_{E^{2m}} \right),
\end{equation}
hence there exists an invertible matrix $\mathbf{P}\in\mathbb{C}^{2m\times 2m}$ such that
\begin{equation}\label{eq:propsV0R0}
	\mathbf{V}_{0} = \mathbf{V}_{E^{2m}}\mathbf{P}^{-1}
	, \qquad 
	\mathbf{R}_{0} = \mathbf{P}\mathbf{R}_{E^{2m}}\mathbf{P}^{-1}.
\end{equation}
This matrix $\mathbf{P}$ is the change of basis matrix so that $\mathbf{R}_0$ is similar to its diagonalization $\mathbf{R}_{E^{2m}}$. When we substitute the maps (\ref{eq:expansion}) into the second invariance relation of Eq. (\ref{eq:invariance}), we indeed find that $\mathbf{A} \mathbf{V}_{0} =\mathbf{V}_{0} \mathbf{R}_{0} $.
\begin{figure}[t]
    \centering
    \includegraphics[width=.8\textwidth]{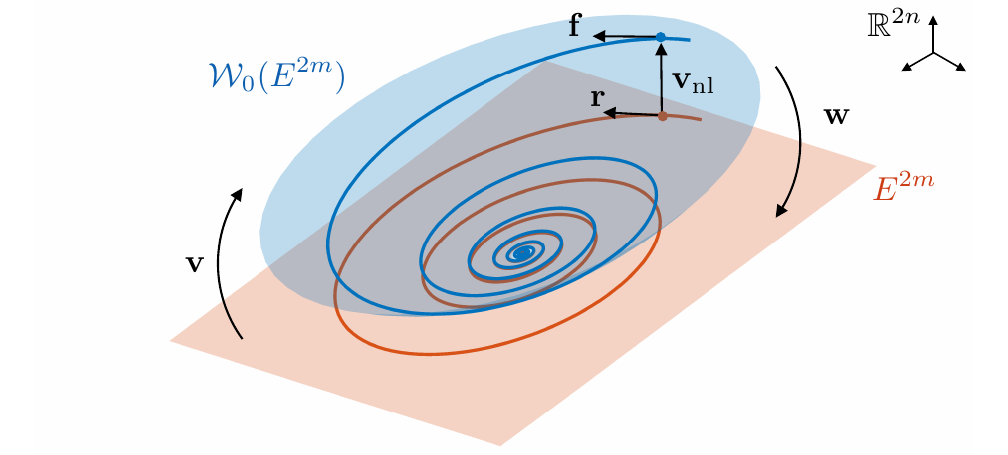}
    \caption{Illustration of parametrization of an autonomous invariant manifold $\mathcal{W}_0(E^{2m})$ (in blue, with a trajectory on it) using the tangent space at the origin, being the spectral subspace $E^{2m}$ (in orange, with the projected trajectory on it).}
    \label{fig:Fig1}
\end{figure}

\section{Learning SSMs from data}
\label{sec:learning}
We assume knowledge of the linear part of the full system (\ref{eq:mechsys}) in terms of the matrices $\mathbf{M},\mathbf{C},\mathbf{K}$, and the external forcing vectors $\mathbf{f}_{\mathbf{k}}^{\mathrm{ext}}$. This linear information is provided by any generic FE software, which we also use to generate a small number of trajectories of (\ref{eq:mechsys}) for $\varepsilon=0$. From these simulations, we aim to learn the nonlinear components of the maps in Eq. (\ref{eq:expansion}). The linear part of these maps can be explicitly obtained from the linearized system information, as we will show in this section. 

To parametrize an SSM discussed in Section \ref{sec:ssms}, we use a \emph{graph}-style of parametrization, wherein the coordinate chart is obtained as a projection onto the spectral subspace $E^{2m}$ without any time-dependent terms, as shown in Fig. \ref{fig:Fig1}. By truncating the expansions \eqref{eq:expansion} at order $\mathcal{O}(\varepsilon\|\mathbf{y}\|)$, we obtain our SSM-based ROM from the expressions
\begin{equation}
\label{eq:reducedordermodel}
\begin{array}{c}
\mathbf{w}(\mathbf{x},\boldsymbol{\Omega}t;\varepsilon)=\mathbf{W}_{0}\mathbf{x}, \\ \mathbf{v}(\mathbf{y},\boldsymbol{\Omega}t;\varepsilon)=\mathbf{V}_0\mathbf{y} + \mathbf{v}_{\mathrm{nl}}(\mathbf{y})+\varepsilon\mathbf{v}_{1}(\boldsymbol{\Omega}t), \qquad \mathbf{r}(\mathbf{y},\boldsymbol{\Omega}t;\varepsilon)=\mathbf{R}_0\mathbf{y} + \mathbf{r}_{\mathrm{nl}}(\mathbf{y})+\varepsilon\mathbf{r}_{1}(\boldsymbol{\Omega}t),
\end{array}
\end{equation}
which is valid for moderate amplitudes of displacement and and forcing~\cite{Haller2016,Breunung2018}. 

As we will show, we can obtain explicit expressions for $\mathbf{W}_{0}$, $\mathbf{V}_{0}$, $\mathbf{R}_{0}$, $\mathbf{v}_{1}(\boldsymbol{\Omega}t)$ and $\mathbf{r}_{1}(\boldsymbol{\Omega}t)$ using $\mathbf{M},\mathbf{C},\mathbf{K}$, and $\mathbf{f}_{\mathbf{k}}^{\mathrm{ext}}$. The nonlinear cores of the model $\mathbf{v}_{\mathrm{nl}}(\mathbf{y})$, $\mathbf{r}_{\mathrm{nl}}(\mathbf{y})$ can be then obtained from unforced system simulations, following \cite{Cenedese2022a,Cenedese2022b}. 

Another approach to parametrize the reduced dynamics employs an extended normal form, constructed from the data-driven approach in \cite{Cenedese2022a}. As described in detail in the upcoming Section \ref{sec:initcond}, the coefficients of this extended normal-form dynamics are sparse by construction. Furthermore, this normal-form allows the direct extraction of backbone curves, damping curves, and forced response curves (FRCs) \cite{Breunung2018,Ponsioen2019,Jain2021}. Specifically, normal forms on SSMs enable fast computation of FRCs, either via analytical solutions (for the case $m=1$) or by simplifying the periodic orbit computation to a fixed-point problem  \cite{Li2022a,Li2022b}, which is particularly useful in the case of internal resonances. 

To identify normal forms using data, we seek a near-identity change of coordinates~\cite{Cenedese2022a} 
\begin{equation}
\label{eq:invconstraints}
\begin{array}{l}
\mathbf{y}=\mathbf{h}(\mathbf{z},\boldsymbol{\Omega}t;\varepsilon)=\mathbf{P}\left( \mathbf{z} + \mathbf{h}_{\mathrm{nl}}(\mathbf{z})-\varepsilon\mathbf{h}_{1}(\boldsymbol{\Omega}t)\right), \\ \mathbf{z}=\mathbf{h}^{-1}(\mathbf{y},\boldsymbol{\Omega}t;\varepsilon)=\mathbf{P}^{-1}\mathbf{y} + \mathbf{h}^{-1}_{\mathrm{nl}}(\mathbf{P}^{-1}\mathbf{y})+\varepsilon\mathbf{h}_{1}(\boldsymbol{\Omega}t),
\end{array}
\end{equation}
that transforms the SSM-reduced dynamics in the simplest possible complex polynomial form,
\begin{equation}
\label{eq:dynnorform}
\dot{\mathbf{z}}=\mathbf{n}(\mathbf{z},\boldsymbol{\Omega}t;\varepsilon)=\mathbf{R}_{E^{2m}}\mathbf{z} + \mathbf{n}_{\mathrm{nl}}(\mathbf{z}) + \varepsilon\mathbf{n}_{1}(\boldsymbol{\Omega}t), \qquad \mathbf{z}\in\mathbb{C}^{2m},
\end{equation}
as we describe in the next sections.

Our construct remains applicable for general quasi-periodic forcing, as long as the frequency spectrum of external forcing has no resonance relationships with the spectrum of $\mathbf{A}$ outside the spectral subspace $E^{2m}$. 

\subsection{Setting up graph-style approaches}

Before data-driven learning of SS, we need to set up the linear parts of the ROM \eqref{eq:reducedordermodel} to satisfy the invertibility relations~\eqref{eq:invertibility} and the invariance equations~\eqref{eq:invariance}.

Substituting (\ref{eq:reducedordermodel}) in the first identity in Eq. (\ref{eq:invertibility}), we obtain the constrains that $\forall\,\, \mathbf{y},t$
\begin{equation}
\label{eq:inv_constr}
	\mathbf{W}_{0} \mathbf{V}_{0} = \mathbf{I}
	, \qquad 
	\mathbf{W}_{0}\mathbf{v}_{\mathrm{nl}}(\mathbf{y})\equiv \mathbf{0}
	, \qquad
	\mathbf{W}_{0}\mathbf{v}_{1}(\boldsymbol{\Omega}t) \equiv \mathbf{0}.
\end{equation}
As $\mathbf{V}_{0}$ and $\mathbf{R}_{0}$ are set according to Eq. (\ref{eq:propsV0R0}), we need to choose $\mathbf{W}_{0}$. 

The simplest choice is to set $\mathbf{W}_{0} = \mathbf{P}\mathbf{W}_{E^{2m}}$, so that the coordinate chart is a modal projection, i.e., the rows of $\mathbf{W}_{0}$ are linear combinations of those of $\mathbf{W}_{E^{2m}}$ as defined by the matrix $\mathbf{P}$. Substituting Eq. (\ref{eq:reducedordermodel}) into the first invariance equation (\ref{eq:invariance}), we obtain the reduced dynamics as  $\mathbf{r}(\mathbf{W}_{0}\mathbf{x},\boldsymbol{\Omega}t;\varepsilon)=\mathbf{W}_{0}\mathbf{f}(\mathbf{x},\boldsymbol{\Omega}t;\varepsilon)$. As expected from graph-style parameterization, this reduced dynamics is simply the projection of the full dynamics onto $E^{2m}$ via $\mathbf{W}_{0}$. In particular, we have $\mathbf{W}_{0} \mathbf{A} = \mathbf{R}_{0} \mathbf{W}_{0}$ , which upon left-multiplying with $\mathbf{V}_0$, yields
\begin{equation}
\label{eq:linearinvariance}
\mathbf{W}_{0} \mathbf{A} \mathbf{V}_{0} = \mathbf{R}_{0}.
\end{equation}
Among the possible forms of the linear parts, a simple choice is to use the first-order damped modes to parametrize the SSM. Hence, $\mathbf{P}$ is the identity matrix of dimension $2m$ in the case of complex-conjugate reduced coordinates. For real reduced coordinates, $\mathbf{P}$ has a block diagonal structure with $m$-identical blocks of matrix $\mathbf{P}_2\in\mathbb{C}^{2\times 2}$, defined as
\begin{equation}
\mathbf{P}_2 = \begin{bmatrix} 1 & -i \\ 1 & i\end{bmatrix}.
\end{equation}
For the common case of proportional damping $\mathbf{C} = \alpha \mathbf{M}+ \beta \mathbf{K}$, we simply adopt modal displacement and modal velocities from the conservative mode shapes $\mathbf{U}_0$, resulting in
\begin{equation}
\mathbf{W}_{0} = \begin{bmatrix} \mathbf{U}_0^\top \mathbf{M} & \mathbf{0} \\  \mathbf{0} & \mathbf{U}_0^\top \mathbf{M} \end{bmatrix} , \qquad \mathbf{V}_{0} = \begin{bmatrix} \mathbf{U}_0 & \mathbf{0} \\  \mathbf{0} & \mathbf{U}_0 \end{bmatrix}, \qquad  \mathbf{R}_{0} = \begin{bmatrix} \mathbf{0} & \mathbf{I} \\  -\boldsymbol{\omega}^2_0 & -(\alpha \mathbf{I} + \beta \boldsymbol{\omega}^2_0 )\end{bmatrix}.
\end{equation}
In this case, our ROM is a mechanical system in the coordinates $\mathbf{y} = (\mathbf{q}_m,\dot{\mathbf{q}}_m)$, where $\mathbf{q}_m = \mathbf{U}_0^\top \mathbf{M} \mathbf{q}$.

As an alternative, one can choose $\mathbf{W}_{0}$ more generally and not as a modal projection, i.e., in case its rows are not linear combinations of those of $\mathbf{W}_{E^{2m}}$. For example, one could use the displacements and velocities of specific degrees of freedom to describe the SSM. This approach was adopted by the pioneering work of Shaw and Pierre \cite{Shaw1993}. We will discuss this approach in one of our examples and in Appendix \ref{app:nmgs}, which also contains some cautionary notes on graph-style parametrizations whose coordinate charts are not modal projections.

\subsection{Autonomous SSM geometry and reduced dynamics}
\label{sec:initcond}
Once the spectral subspace and the consequent linear parts of the SSM and its reduced dynamics are determined, the nonlinear parts $\mathbf{v}_{\mathrm{nl}}(\mathbf{y})$ and $\mathbf{r}_{\mathrm{nl}}(\mathbf{y})$ can be computed from the simulation data via any regression technique. This is because the reduced coordinates $\mathbf{y} = \mathbf{W}_{0}\mathbf{x}$ are known a priori for our graph-style parameterization. For this purpose, we must use trajectory data that have a strong footprint of the dynamics of the SSM under investigation, up to a maximal amplitude $a_{\text{max}}$ of interest. We denote by $s(\mathbf{x})$ a function that provides the signed amplitude of interest for any state $\mathbf{x}$ of the system. 

Recognizing that an SSM is locally approximated at leading order by its spectral subspace near the fixed point, we propose two strategies for choosing initial conditions for training simulation. 
\begin{enumerate}
\item We use an external static loading to reach the amplitude of interest such that the resulting static deflection is similar to the underlying mode shape  of the SSM. While providing relevant nonlinear initial conditions, this approach requires fully nonlinear static solutions that may be computationally intensive to obtain.
\item As a faster alternative, we define initial conditions using the mode shapes as $\mathbf{x}_0=(\mathbf{q}_0,\mathbf{0})$ with  $\mathbf{q}_0=\mathbf{U}_0\mathbf{q}_m$ satisfying $|s(\mathbf{x}_0)|\geq a_{\text{max}}$. By simply evaluating the internal forces under such a displacement, we explore the nonlinear force field and choose an appropriate amplitude of the mode shape for initial conditions, as we will show with specific examples. 
\end{enumerate}

Using both initialization strategies above, we expect that the full system trajectories converge to reduced dynamics on the nearby slow SSM after some initial transients~\cite{Haller2016}. As we will show using examples, the second strategy above is also relevant for identifying intermediate SSMs, which are required for reducing internally resonant systems.

For weakly-damped mechanical systems, the reduced dynamics trajectories cover the SSM with high density. Therefore, only a few trajectories are sufficient to learn the SSM geometry. Specifically, for two-dimensional SSMs ($m=1$), a single trajectory initialized along the underlying mode shape is sufficient in our experience. For higher-dimensional SSMs, initial conditions along different modal directions are required. Specifically, for four-dimensional SSMs with $\mathbf{U}_0=[\mathbf{u}_{j_1}\,\,\mathbf{u}_{j_2}]$, we use at least three initial conditions for training, i.e., two along each of the two modes and one along their interaction $\alpha_1 \mathbf{u}_{j_1}+\alpha_2\mathbf{u}_{j_2}$, where $\alpha_1, \alpha_2 \in (0,1)$ are random numbers. Different choices for $\alpha_1,\alpha_2$ would result in more training and testing data, but the simulation time would be prohibitive, especially for very high-dimensional models.

Once the training and testing data are collected and appropriately truncated to eliminate initial transients, we use polynomial regression to identify the autonomous parts of the SSM parametrization and SSM reduced dynamics as 
\begin{equation}\label{eq:regressparared}
	\displaystyle \mathbf{v}_{\mathrm{nl}\star} = \mathrm{arg}\min_{\mathbf{v}_{\mathrm{nl}}}\sum_{j=1}^{P}\left\Vert\mathbf{x}_{j} - \mathbf{V}_0\mathbf{y}_{j}-\mathbf{v}_{\mathrm{nl}}\left(\mathbf{y}_{j}\right)\right\Vert ^{2}, \qquad
	\displaystyle \mathbf{r}_{\mathrm{nl}\star} = \mathrm{arg}\min_{\mathbf{r}_{\mathrm{nl}}}\sum_{j=1}^{P}\left\Vert\dot{\mathbf{y}}_{j} - \mathbf{R}_0\mathbf{y}_{j}-\mathbf{r}_{\mathrm{nl}}\left(\mathbf{y}_{j}\right)\right\Vert ^{2},
\end{equation}
where the reduced variables $\mathbf{y}_{j}$ are obtained directly by projecting the training data $\mathbf{x}_{j}$ onto the spectral subspace as  $\mathbf{y}_{j} = \mathbf{W}_0\mathbf{x}_{j}$; $P$ is the number of training datapoints; and the time derivative can be computed via numerical differentiation. In Appendix \ref{app:reg_para_prop}, we show that the optimal solution~\eqref{eq:regressparared} for the polynomial regression of the parametrization satisfies the second constraint in Eq.~\eqref{eq:invconstraints}. We also remark that, if the reduced dynamics has the form of a mechanical system, i.e., $\mathbf{y} = (\mathbf{q}_m,\dot{\mathbf{q}}_m)$, then the first $m$ values of the map $\mathbf{r}_{\mathrm{nl}}$ must be zero being the reduced dynamics the equivalent first order system, i.e., we only need to identify nonlinear forces.

For the dynamics in normal form, our approach assumes that the maps $\mathbf{h}_{\mathrm{nl}}$, $\mathbf{h}_{\mathrm{nl}}^{-1}$ and $\mathbf{n}_{\mathrm{nl}}$ are multivariate polynomials whose coefficients are determined by the eigenvalues of $\mathbf{R}_{E^{2m}}$ as in classic unfoldings of bifurcations \cite{GH1983,Murdock2003}. Here, the classic Poincaré \cite{Poincare1892} normal form construct is relaxed to what we refer to as \textit{extended normal form}, in which near-resonant terms are also retained in addition to the resonant terms~\cite{Ponsioen2020,Jain2021,Cenedese2022a}. The numerical values of the normal form coefficients are identified from data by minimizing the (unforced) conjugacy error as (see \cite{Cenedese2022a} for details)
\begin{equation}\label{eq:conjerror}
(\mathbf{n}_{\mathrm{nl}\star},\mathbf{h}_{\mathrm{nl}\star}^{-1})= \mathrm{arg}\min_{\mathbf{n}_{\mathrm{nl}},\mathbf{h}_{\mathrm{nl}}^{-1}}\sum_{j=1}^{P}\left\Vert D\mathbf{h}^{-1}(\mathbf{y}_{j},\mathbf{0};0)\dot{\mathbf{y}}_{j}-\mathbf{n}_0\left(\mathbf{h}^{-1}(\mathbf{y}_{j},\mathbf{0};0),\mathbf{0};0\right)\right\Vert ^{2}.
\end{equation}
Once $\mathbf{h}_{\mathrm{nl}}^{-1}$ is known, we obtain $\mathbf{h}_{\mathrm{nl}}$ via polynomial regression. Switching to polar coordinates $(\rho_k,\theta_k)$ via the transformation $z_k = \rho_k e^{i\theta_k}$ for $k = 1,2,..., m$, the general normal form on a $2m$-dimensional SSM can be inferred from~\eqref{eq:dynnorform} as
\begin{equation}\label{eq:SSMdynpolar}
\begin{array}{l}
\dot{\rho}_k = -\alpha_k(\boldsymbol{\rho},\boldsymbol{\theta})\rho_k, \\
\dot{\theta}_k = \omega_k(\boldsymbol{\rho},\boldsymbol{\theta}),
\end{array}\,\,\,\,\,\,\,\, k = 1,2,..., m, \,\,\,\,\,\,\,\, \boldsymbol{\rho} = (\rho_1,\rho_2,...\rho_m), \,\,\,\,\,\,\,\, \boldsymbol{\theta} = (\theta_1,\theta_2,...\theta_m).
\end{equation} 
Here, the zero-amplitude limits of the functions $\alpha_k$ and $\omega_k$ converge to the linearized damping and frequency of mode $j_k$. Hence, these functions represent the nonlinear continuations of linear damping and natural frequency. If the linearized frequencies are non-resonant, then $\alpha_k$ and $\omega_k$ only depend on the amplitudes $\boldsymbol{\rho}$.

\subsection{Including external forcing in the reduced-order model}
By substituting the expressions in (\ref{eq:reducedordermodel}) into the first invariance equation of (\ref{eq:invariance}) and collecting the $\mathcal{O}(\varepsilon)$-terms, we obtain
\begin{equation}
\label{eq:invarianceOe}
\mathbf{V}_{0}\mathbf{r}_{1}(\boldsymbol{\Omega}t)+D\mathbf{v}_{1}(\boldsymbol{\Omega}t)\boldsymbol{\Omega} = \mathbf{A}\mathbf{v}_{1}(\boldsymbol{\Omega}t) + \mathbf{f}_{1}(\mathbf{0},\boldsymbol{\Omega}t;0).
\end{equation}
We express $\mathbf{v}_1$ and its derivative in their Fourier expansions as
\begin{equation}
\mathbf{v}_{1}(\boldsymbol{\Omega}t) = \sum_{\mathbf{k}\in\mathbb{Z}^{l}}\mathbf{v}_{\mathbf{k}}^{1}e^{i\langle\mathbf{k},\boldsymbol{\Omega}\rangle t}, \qquad D\mathbf{v}_{1}(\boldsymbol{\Omega}t)\boldsymbol{\Omega} = \sum_{\mathbf{k}\in\mathbb{Z}^{l}}\mathbf{v}_{\mathbf{k}}^{1}i\langle\mathbf{k},\boldsymbol{\Omega}\rangle e^{i\langle\mathbf{k},\boldsymbol{\Omega}\rangle t},\qquad \mathbf{v}_{\mathbf{k}}^{1}\in\mathbb{C}^{2n},
\end{equation}
and using Eq. ~\eqref{eq:invconstraints}, we obtain $\mathbf{W}_{0}\mathbf{v}_{\mathbf{k}}^{1} = \mathbf{0}$. Hence, if we project Eq.~\eqref{eq:invarianceOe} via $\mathbf{W}_{0}$ we find that
\begin{equation}
	\label{eq:resforcing}
\mathbf{r}_{1}(\boldsymbol{\Omega}t)=\mathbf{W}_{0} \mathbf{f}_{1}(\mathbf{0},\boldsymbol{\Omega}t;0),
\end{equation}
where we used the identity $\mathbf{W}_{0} \mathbf{A} = \mathbf{R}_{0} \mathbf{W}_{0}$. Equation~\eqref{eq:resforcing} includes any resonant forcing along the mode that may trigger a nontrivial forced response. On the other hand, the $\mathcal{O}(\varepsilon)$-terms of the SSM parametrization include the effect of any nonresonant forcing. These terms are obtained by substituting Eq.~\eqref{eq:resforcing} into Eq.~\eqref{eq:invarianceOe} leading to
\begin{equation}\label{eq:paramforcingnonmodal}
\mathbf{v}_{\mathbf{k}}^{1} = (\mathbf{A} - i\langle\mathbf{k},\boldsymbol{\Omega}\rangle \mathbf{I})^{-1}( \mathbf{V}_{0}\mathbf{W}_{0} - \mathbf{I} )\begin{pmatrix}\mathbf{0} \\ \mathbf{M}^{-1}\mathbf{f}_{\mathbf{k}}^{\mathrm{ext}} \end{pmatrix}.
\end{equation}
Using modal coordinates and denoting $\mathbf{w}^*$ as the complex-conjugate transpose of the vector $\mathbf{w}$, we obtain an alternative expression for Eq.~\eqref{eq:paramforcingmodal} as
\begin{equation}
\label{eq:paramforcingmodal}
\mathbf{v}_{\mathbf{k}}^{1} = \sum_{\substack{ j = 1, \\ \lambda_j \notin \mathrm{spec}\left(\mathbf{A}\vert_{E^{2m}}\right)} }^{2n}\frac{1}{\lambda_j - i\langle\mathbf{k},\boldsymbol{\Omega}\rangle }\mathbf{v}_j \mathbf{w}_j^*( \mathbf{V}_{0}\mathbf{W}_{0} - \mathbf{I} )\begin{pmatrix}\mathbf{0} \\ \mathbf{M}^{-1}\mathbf{f}_{\mathbf{k}}^{\mathrm{ext}} \end{pmatrix},
\end{equation}
which is useful for computational implementations. In particular, for a mechanical system with proportional damping, we use the conservative, mass-normalized mode shapes to rewrite Eq.~\eqref{eq:paramforcingmodal} with $\mathbf{v}_{\mathbf{k}}^{1} = (\mathbf{v}_{\mathbf{k},q}^{1},\mathbf{v}_{\mathbf{k},\dot{q}}^{1})$ as
\begin{equation}
	\label{eq:paramforcingmodalMech}
	\mathbf{v}_{\mathbf{k},q}^{1} = \sum_{\substack{ j = 1, \\ \omega_{0,j} \notin \boldsymbol{\omega}_0 } }^n\frac{\mathbf{u}_j^\top \mathbf{f}_{\mathbf{k}}^{\mathrm{ext}}}{\omega_{0,j}^2 - \langle\mathbf{k},\boldsymbol{\Omega}\rangle^2 + i(\alpha + \beta \omega_{0,j}^2 )\langle\mathbf{k},\boldsymbol{\Omega}\rangle }\mathbf{u}_j , \qquad \mathbf{v}_{\mathbf{k},\dot{q}}^{1} = i\langle\mathbf{k},\boldsymbol{\Omega}\rangle\mathbf{v}_{\mathbf{k},q}^{1}.
	\end{equation}
Indeed, Eq. (\ref{eq:paramforcingmodalMech}) may be approximated by using a finite number of non-resonant modes $N\ll n$. 

When the reduced dynamics are transferred to their normal form, further steps are necessary to include the effect of external forcing. First, we remark that the quasi-periodic vector $\mathbf{h}_{1}(\boldsymbol{\Omega}t)$ appears on both $\mathbf{h}$, $\mathbf{h}^{-1}$ with opposite sign due to invertibility relation $\mathbf{y}=\mathbf{h}(\mathbf{h}^{-1}(\mathbf{y},\boldsymbol{\Omega}t;\varepsilon),\boldsymbol{\Omega}t;\varepsilon)$. Here, we focus on the periodic forcing case, which appears commonly in structural dynamics applications (see \cite{Jain2021,Cenedese2022a} for details on the treatment for generic quasi-periodic forcing). Thus, we now assume that the external forcing is given as
\begin{equation}\label{eq:externalperiodicforcing}
\mathbf{f}^{\mathrm{ext}}(\mathbf{q},\dot{\mathbf{q}},\boldsymbol{\Omega}t;\varepsilon)= \mathbf{f}_{0}^{\mathrm{ext}}\frac{e^{i\Omega t}+e^{-i\Omega t}}{2}+\mathcal{O}(\varepsilon\|(\mathbf{q},\dot{\mathbf{q}})\|).
\end{equation}
We assume, without loss of generality, that the diagonal entries of $\mathbf{R}_{E^{2m}}$ are ordered as \linebreak $\{ \lambda_{j_1},\,\lambda_{j_2},\,...\,\lambda_{j_m},\,\bar{\lambda}_{j_1},\,\bar{\lambda}_{j_2},\,...\,\bar{\lambda}_{j_m}\}$. Hence, the last $m$ columns (resp. rows) of $\mathbf{P}$ (resp. $\mathbf{P}^{-1}$) are the complex conjugates of the first $m$. We also denote $\boldsymbol{\Lambda}_m$ as the diagonal matrix whose entries are $\{ \lambda_{j_1},\,\lambda_{j_2},\,...\,\lambda_{j_m}\}$. Finally we introduce the following notation:
\begin{equation}
	\mathbf{g}^r = \mathbf{P}^{-1}\mathbf{W}_{0} \begin{pmatrix} \mathbf{0} \\\mathbf{f}_{0}^{\mathrm{ext}} \end{pmatrix}= \begin{pmatrix} \mathbf{g} \\ \bar{\mathbf{g}} \end{pmatrix}, \qquad \mathbf{g} = \left( g_1, \, g_2, \,  ... \,, \, g_m \right)^\top, \qquad g_k\in\mathbb{C} \,\,\mathrm{for}\,\, k=1,\,2,\,...\,,\,m.
\end{equation}
For our reduced dynamics to capture any possible resonant forcing, we focus on the case wherein $\Omega$ is close to the natural frequency of $K$ of the modes associated with the SSM. Specifically, we define the index set $R :=\{k_1,k_2,...,k_K\}$ with $1\leq K\leq m$ 
satisfying $k\in R:\omega_{0,k}\approx \Omega$. If the linearized frequencies of the modes related to the SSM are well-separated, then $R$ contains only one element, but $R$ may contain multiple entries otherwise. For instance, the set $R$ contains two indices when there exists a $1:1$ internal resonance. Let $\mathbf{I}_R\in\mathbb{R}^{m \times m}$ be a diagonal matrix such that
\begin{equation}
	\begin{cases}
	\left(\mathbf{I}_R\right)_{kk} = 1,\qquad \mathrm{if\,\,} k\in R \\
	\left(\mathbf{I}_R\right)_{kk} = 0,\qquad \mathrm{otherwise}.	
	\end{cases}
\end{equation}
As we show in Appendix \ref{app:s1}, the forcing terms in the normal form are given as
\begin{equation}
	\mathbf{n}_1(\Omega t) = \begin{pmatrix} e^{i\Omega \mathbf{I} t} \mathbf{I}_R \mathbf{g} \\ e^{-i\Omega \mathbf{I} t} \mathbf{I}_R \mathbf{g} \end{pmatrix} , \,\,\,\,\,\, \mathbf{h}_1(\Omega t) = \begin{pmatrix} \left(\boldsymbol{\Lambda}_m - i\Omega \mathbf{I} \right)^{-1}e^{i\Omega \mathbf{I} t} \left(\mathbf{I}-\mathbf{I}_R \right)\mathbf{g} + \left(\boldsymbol{\Lambda}_m + i\Omega \mathbf{I} \right)^{-1}e^{-i\Omega \mathbf{I} t}\mathbf{g} \\  \left(\bar{\boldsymbol{\Lambda}}_m + i\Omega \mathbf{I} \right)^{-1}e^{-i\Omega \mathbf{I} t} \left(\mathbf{I}-\mathbf{I}_R \right)\bar{\mathbf{g}} + \left(\bar{\boldsymbol{\Lambda}}_m - i\Omega \mathbf{I} \right)^{-1}e^{i\Omega \mathbf{I} t}\bar{\mathbf{g}} \end{pmatrix}.
\end{equation}
Expressing $g_k = if_ke^{i\phi_k}$, where $f_k = |g_k|$ denotes the modal forcing amplitude of the $j_k$-th mode, and $ \phi_k = \angle g_k - \pi/2$ denotes its phase (with $\angle$ being the argument of the complex number $g_k$), we obtain the polar normal form including forcing terms as
\begin{equation}\label{eq:SSMdynpolarforced}
	\begin{array}{l}
	\dot{\rho}_k = -\alpha_k(\boldsymbol{\rho},\boldsymbol{\theta})\rho_k - f_k \sin\left( \Omega t + \phi_k - \theta_k \right), \\
	\displaystyle \dot{\theta}_k = \omega_k(\boldsymbol{\rho},\boldsymbol{\theta}) + \frac{f_k}{\rho_k} \cos\left( \Omega t + \phi_k - \theta_k \right),
	\end{array}
\end{equation} 
for every $k\in R$. For $k\notin R$, the normal form does not contain any forcing terms as in Eq.~\eqref{eq:SSMdynpolar}. As mentioned earlier, the polar normal forms are instrumental in expressing the periodic orbit computation as a fixed-point problem via appropriate phase shifts. For $ m = 1$, the forced response can also be retrieved analytically as the zero-level set of a scalar function~\cite{Breunung2018,Jain2021,Cenedese2022a}. For internally resonant systems ($m>1$), forced response can still be obtained as a fixed-point problem~\cite{Li2022a,Li2022b}, which greatly simplifies bifurcation analysis, as we demonstrate through examples in the next section.

\section{Examples}\label{sec:examples}
We now illustrate our data-assisted SSM-reduction approach on specific finite element models. Our computations have been carried out using the open-source \textsc{MATLAB}\textsuperscript{©}
packages, \texttt{SSMLearn} \cite{Cenedese2022a} and \texttt{SSMTool} \cite{SSMTool2021}. We use \texttt{SSMLearn} to identify ROMs from data, whose results are compared to the equation-driven ROMs obtained using \texttt{SSMTool}. Moreover, we have incorporated some features of \texttt{SSMTool} in \texttt{SSMLearn} for the prediction of the forced response, especially in internally resonant cases \cite{Li2022a}. Our examples and results are available in the \texttt{SSMLearn} repository at \cite{SSMLearn}.

To present our results for forced periodic motions of period $T$, we use the following definitions for the amplitude and the phase of oscillations:
\begin{equation}\label{eq:physampphase}
\mathrm{amp} = \max_{t \in [0,T)} \left| s \left( \mathbf{v} \left( \mathbf{y}(t), \boldsymbol{\Omega}t \right)\right)\right|, \qquad \mathrm{phase} = \angle \int_0^T s \left( \mathbf{v} \left( \mathbf{y}(t), \boldsymbol{\Omega}t \right)\right) e^{-i2\pi t/T} dt.
\end{equation} 
Unless specified otherwise, we will consider the case $T = 2\pi / \Omega$, and, therefore, the phase in Eq. (\ref{eq:physampphase}) is that of the primary harmonic. For the case of backbone curves associated to two-dimensional SSMs, we use the amplitude metric~\cite{Szalai2017,Ponsioen2018,Cenedese2022b}
\begin{equation}\label{eq:NFphysamp}
	\mathrm{amp}(\rho) = \max_{\theta \in [0,2\pi)} \left| s \left( \mathbf{v} \left( \mathbf{h}(\mathbf{z},\mathbf{0};0),\mathbf{0};0 \right)\right)\right|, \qquad \mathbf{z} = \left( \rho e^{i\theta},\rho e^{-i\theta} \right).
\end{equation} 
%

Following~\cite{Cenedese2022a,Cenedese2022b}, we use the normalized mean-trajectory-error ($\mathrm{NMTE}$) to quantify the errors of an SSM-based ROM in autonomous trajectory predictions. For $P$ observations  $\mathbf{x}_{j}, j=1,\dots, P$ along a trajectory, and their model-based reconstructions,
$\hat{\mathbf{x}}_{j}$, this modeling error in percentage is defined as 
\begin{equation}
\mathrm{NMTE}=\frac{100}{P\|\underline{\mathbf{x}}\|}{\displaystyle \sum_{j=1}^{P}\left\Vert \mathbf{x}_{j}-\hat{\mathbf{x}}_{j}\right\Vert }\,,\label{eq rRMSE}
\end{equation}
where $\underline{\mathbf{x}}$ is a relevant normalization vector. For example, $\underline{\mathbf{x}}$ may be the data point with the largest norm. We note that using higher-order polynomials generally reduces the $\mathrm{\mathrm{NMTE}}$ error to any required level but excessively small errors can lead to overfitting. In our examples, we will consider acceptable model when featuring $\mathrm{\mathrm{NMTE}}$ errors on test data in the order of $1\%-10\%$, favoring lower order models to higher ones. To validate our SSM-based predictions of frequency response curves (FRCs), we use \texttt{SSMTool} or direct numerical integration.
\subsection{Von Kármán beam}
\label{sec:examplebeam}
\begin{figure}[h!]
    \centering
    \includegraphics[width=1\textwidth]{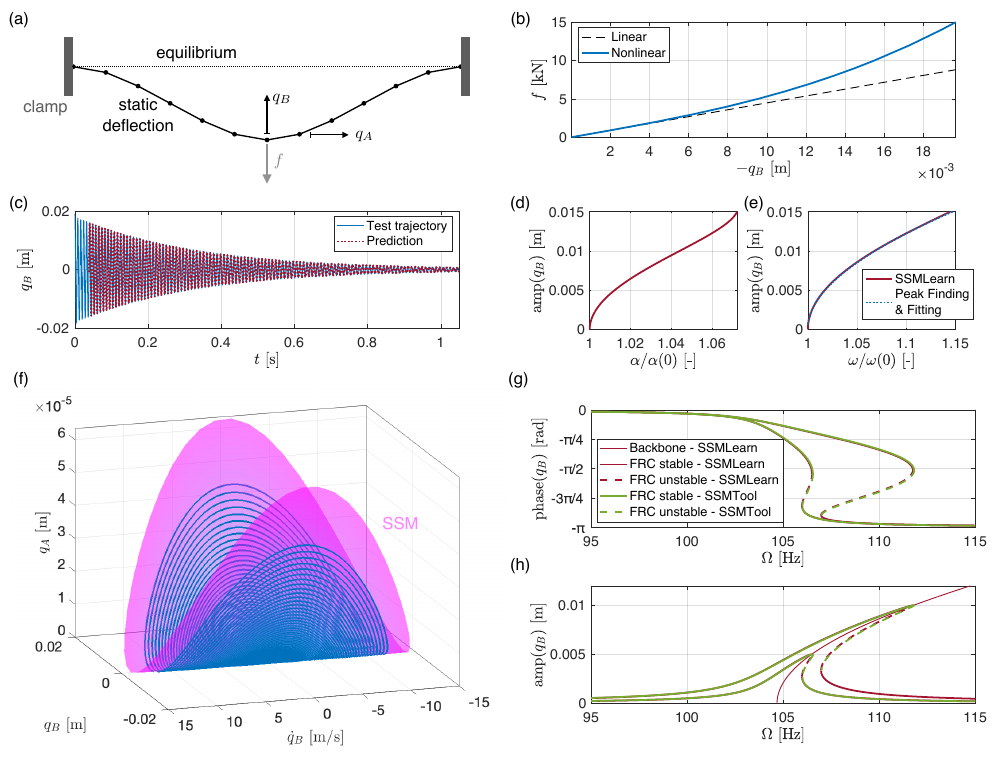}
    \caption{Plot (a) shows the finite element discretization of the beam, including the equilibrium position and the static deflection when subject to midpoint loading. The force displacement relation of such static loading is instead shown in plot (b), distinguishing the linear (black dashed line) and nonlinear case (blue line), and plotting the midpoint displacement. Plot (c) shows the test trajectory from the numerical simulation of the full model (blue curve) and its prediction (red line) from the \texttt{SSMLearn} reduced-order model. This reduced-order model predicts the backbone curves shown in plot (d,e), in terms of damping and frequency, where the frequency is compared to that extracted by processing the training trajectory with the method of Peak Finding and Fitting, \cite{Jin2020}. Plot (f) shows the SSM in the physical space along with the training trajectory, where $q$ and $q_a$ are the transverse and longitudinal displacements shown in plot (a), respectively. Plots (g,h) show forced responses in terms of amplitude and phase of $q$ computed via \texttt{SSMTool} (green) and \texttt{SSMLearn} (dark red) for two forcing amplitude values.}
    \label{fig:vonkarmanbeam}
\end{figure}

As a first example, we analyze an FE model of a von Kármán beam~\cite{Jain2018} with clamped-clamped boundary conditions, shown in Fig. \ref{fig:vonkarmanbeam}(a), which is also discussed in \cite{Cenedese2022a}. This beam model captures moderate deformations by including a nonlinear, quadratic term in the kinematics. Here, we discretize the beam with 12 elements, using cubic shape functions for the transverse deflection and linear shape functions for the axial displacement. The resulting model contains 33 degrees of freedom (DOFs), and it describes an aluminium beam of length 1 [m], width 5 [cm], thickness 2 [cm] and the material damping modulus 10$^6$ [Pa-sec]. The slowest eigenvalue is approximately $-3.09+i657.72$. 

The spectral gap (ratio between the real parts) between the first and second slowest modes is 7.6, which means that the decay along the second or higher modes is more than seven times faster than that along the first mode. Hence, we aim to construct a ROMs for this beam using the slowest, two-dimensional SSM, which is the nonlinear continuation of the first vibration mode. 

To capture data close to the SSM, we used the initialization strategy based on static loading outlined in Section \ref{sec:initcond}. Specifically, we force the beam from midpoint loading $f$ and we measure the transverse displacement of this midpoint $q_B$, as shown in Fig.~\ref{fig:vonkarmanbeam}a. In Fig.~\ref{fig:vonkarmanbeam}b, we compare the linear and nonlinear internal force at the midpoint vs the static deflection as the force magnitude increases. This choice of forcing results in a static displacement similar to the first mode shape. Hence, we expect that an unforced trajectory initialized with such a displacement would quickly converge to the SSM. We choose the initial amplitudes for training and testing trajectories near a displacement $q_B$ of around 2 [mm]. Then, we train a seventh-order model using \texttt{SSMLearn}, whose reduced dynamics are obtained as
\begin{equation}\label{eq:nf_vkbeam}
	\begin{array}{rl}\dot{\rho} = & \alpha(\rho) = -3.09\rho -1.6198 \rho^{3}+2.696 \rho^{5}+0.83303 \rho^{7}\\ \dot{\theta} = &\omega(\rho) = +657.7165+469.4784 \rho^{2}-308.8319 \rho^{4}-103.9608 \rho^{6}.\end{array}
\end{equation}

This ROM returns an NMTE of 3.53 \% for the test trajectory, with the prediction shown in Fig. \ref{fig:vonkarmanbeam}(c). The ROM also produces the backbone curves for damping and frequency in Figs.~\ref{fig:vonkarmanbeam}(d,e). The $x$-axis of these backbone curves denotes the variation of damping and frequency with respect to the zero-amplitude limit, i.e., their linearized values. The instantaneous frequency backbone of  Fig. \ref{fig:vonkarmanbeam}(e) has a good agreement with those extracted directly from the training trajectory using the signal processing method known as Peak Finding and Fitting~\cite{Jin2020}. The ROM also describes the SSM geometry, as shown in Fig.~\ref{fig:vonkarmanbeam}(f), along with the training trajectory. The plot in Fig.~\ref{fig:vonkarmanbeam}(f) is in the coordinates $q_B$, $\dot{q}_B$ and $q_A$, where the axial displacement $q_A$ of the midpoint is plotted against its bending displacement and velocity $q_B$, $\dot{q}_B$. Hence, the geometry of the SSM captures the nonlinear bending-stretching coupling of the beam.

For modeling the SSM, we can also use the non-modal graph-style parametrization discussed in Appendix \ref{app:nmgs} and choose $\mathbf{W}_0$ to be the projection to the midpoint displacement $q_B$ and its velocity $\dot{q}_B$. In this case, we obtain a similar NMTE using a cubic order model, whose reduced dynamics take the form
\begin{equation}\label{eq:mech_vkbeam}
    \begin{array}{rl}
    \ddot{q}_B = &  -432600 q_B -6.18 \dot{q}_B +2548.1  q_B^2 + 15.476  q_B\dot{q}_B  -0.012043 \dot{q}_B^2 \\ & -743340000 q_B^3      -21020  q_B^2 \dot{q}_B   -339.02  q_B \dot{q}_B^2 + 0.0047173\dot{q}_B^3.
    \end{array}
\end{equation}
Note that, however, the characterization of this reduced dynamics is not as immediate as that of the normal form from a dynamical system perspective. Moreover, as discussed in Appendix \ref{app:nmgs}, forcing is not as simple to add to the reduced as for the modal graph-style.

Now, we study the forced response of the system using our SSM-based ROM in normal form. We introduce periodic forcing to this ROM by simply turning the midpoint static forcing into time-periodic. As shown in Fig. \ref{fig:vonkarmanbeam}(g,h), the FRC predictions of the \texttt{SSMLearn} model are in close agreement with the equation-driven model obtained via \texttt{SSMTool}.
\begin{figure}[h!]
    \centering
    \includegraphics[width=1\textwidth]{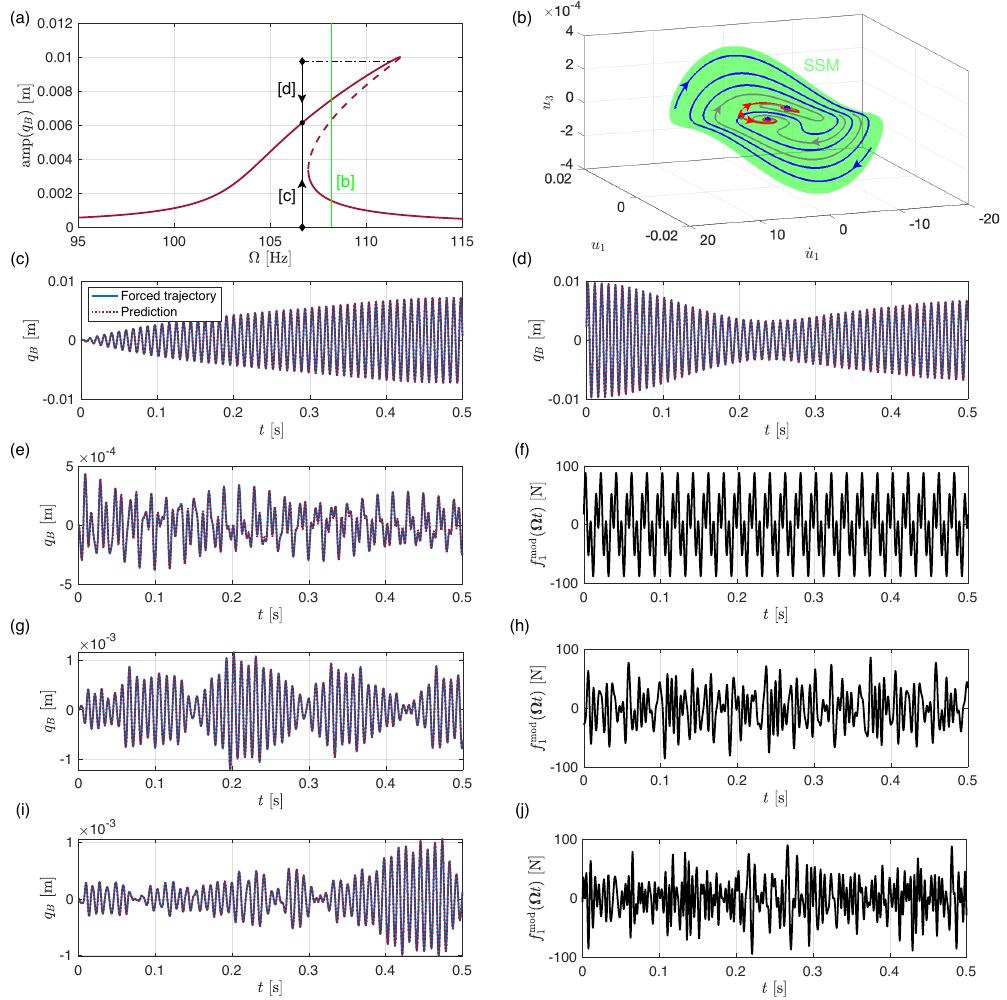}
    \caption{Plot (a) shows the forced response curve, whose green line depict the frequency to which plot (b) refers, where a section of the three dimensional forced SSM is shown, along with the periodic orbits (red and blue dots), the stable (blue line) and unstable (red line) manifolds of the saddle (red point), and two trajectories (grey lines) coverging to the two attractors. Plots (c,d) show two periodically forced trajectories and their predictions converging to the same attractor from different initial conditions, as shown in plot (a). Plots (e,g,i) show three quasi-periodically forced trajectories with initial condition being the origin, whose forcing is shown in plots (f,h,j), respectively.}
    \label{fig:vonkarmanbeamforced}
\end{figure}

Next, we turn our attention to additional forced response. Namely, instead of looking at periodic solutions arising from periodic forcing, we show that the reduced-order model we constructed is capable of predicting general forced trajectories, as show in Fig. \ref{fig:vonkarmanbeamforced}. First, by still using periodic forcing, we simulate two trajectories that converge to the periodic attractor of the forced response curve at $\Omega =105$ [Hz]. The first trajectory, shown in Fig. \ref{fig:vonkarmanbeamforced}(c), has initial condition at the unforced beam equilibrium and converges to the attractor as shown in Fig. \ref{fig:vonkarmanbeamforced}(a). The second trajectory, shown in Fig. \ref{fig:vonkarmanbeamforced}(d), still converges to the same attractor but from a higher amplitude initial condition obtained as a point of the high amplitude attractor at  $\Omega =110$ [Hz], as indicated in Fig. \ref{fig:vonkarmanbeamforced}(a). From these numerical experiments, we note that the \texttt{SSMLearn} predictions agree with the full system simulations. Instead, Fig. \ref{fig:vonkarmanbeamforced}(b) shows a section within the modal coordinate space of the three-dimensional SSM at $\Omega = 108$ [Hz], which deviates from the two-dimensional autonomous one when periodic forcing is added to the system. In this manifold, the three periodic solutions (appearing as points) are displayed, along with the stable and unstable manifolds of the saddle-type periodic orbit of the frequency response. 

The remaining plots in Fig.~\ref{fig:vonkarmanbeamforced} focus on quasi-periodic forcing, where simulations with forcing in (f,h,j) are compared to predictions in (e,g,i). All the trajectories have the origin as the initial condition, and the forcing phases are generated randomly. The trajectory in \ref{fig:vonkarmanbeamforced}(e) features forcing with two frequencies, i.e., $\boldsymbol{\Omega}=(50,150)$ [Hz]; the in Fig.~\ref{fig:vonkarmanbeamforced}(g) has 20 forcing frequencies equally spaced between 50 Hz and 200 Hz; and that in Fig.~\ref{fig:vonkarmanbeamforced}(i) is subject to 200 forcing frequencies equally spaced between 50 Hz and 250 Hz. Overall, the predictions of the ROM match closely with the full system simulations.

\subsection{Prismatic beam in 1:3 internal resonance}
\label{sec:exampleprismatic}

As our next example, we consider the forced hinged-clamped beam discussed in \cite{Li2022a}, originally presented by \cite{Nayfeh1974}. After non-dimensionalization, modal expansion and Galerkin projection, the governing PDE becomes a system of ODEs of the form
\begin{equation}
	\ddot{u}_j + \omega_j^2 u_j = \delta\left(-2c \dot{u}_j +\frac{1}{2l} \sum_{k,l,m =1}^n \alpha_{j,k,l,m} u_k u_l u_m\right) +\varepsilon f_j \cos(\Omega t)
\end{equation}
for $j = 1, \, 2, \, ... ,\, n$, where $u_j$ and $\omega_j$ are the modal coordinates and their respective eigenfrequencies; $\delta$ is the dimensionless slenderness ratio; $c$ is the dimensionless damping coefficient; $l$ is the ratio between the beam length and its characteristic length; $\alpha_{j,k,l,m}$ are the coefficients of cubic nonlinearities; and $f_j$ are the forcing coefficients. For additional details, we refer to reader to \cite{Nayfeh1974,Li2022a}. For $l=2$, the first two modes exhibit a frequency ratio $\omega_2 \approx 3\omega_1$, where $\omega_1 \approx 3.8553$ and $\omega_2 \approx 12.4927$. Hence, these modes are nearly in $1:3$ internal resonance. To study this system, we perform a reduction to its slow, four-dimensional SSM based on the internally resonant modes. Following~\cite{Li2022a}, we consider $n=10$, $\varepsilon = \delta = 10^{-4}$, $c=100$, $ \varepsilon f_1 = 2, \,3.5$ and $f_2 = f_3 = ... = f_{10} = 0$.
\begin{figure}[h!]
    \centering
    \includegraphics[width=1\textwidth]{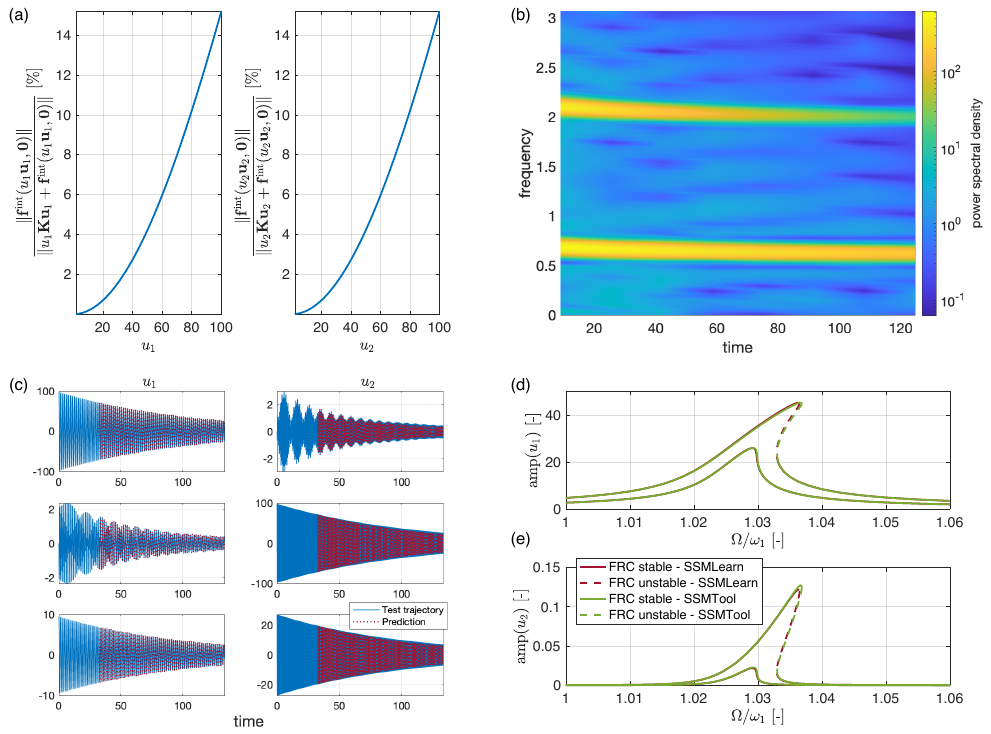}
    \caption{Plot (a) shows the evaluation of the internal force field nonlinearities under varying imposed modal displacement field, for the first (left) and second mode (right). Plot (c) shows the decaying trajectories in the two modal coordinates obtained from the full-order model (in blue) and their prediction (dark red) from the data-driven reduced-order model and plot (b) shows the spectrogram trajectory in blue on the top left plot of (c). Plots (d,e) show the amplitudes of the first two modal coordinates of forced responses computed via \texttt{SSMTool} and \texttt{SSMLearn} for two forcing amplitude values near the lowest eigenfrequency.}
    \label{fig:prismaticbeam}
\end{figure}

To obtain the initial conditions for generating the training trajectories, we use the second strategy outlined in Section \ref{sec:initcond}, i.e., we impose modal displacement fields for the first two modes. In Fig. \ref{fig:prismaticbeam}(a), we evaluate the nonlinear static force fields and plot the ratio between the norm of the nonlinear static force field and that of the full static force field (linear and nonlinear), vs. the amplitude of the imposed modal displacement field, for the first and second modal amplitudes. The plots show a nonlinear trend over the depicted range of modal displacements. Within this range, we use initial conditions for training trajectories from the set $\mathcal{D}_{\mathrm{IC, \,training}}:=\{u_{1,0}\mathbf{u}_1,\, u_{2,0}\mathbf{u}_2, \, a u_{1,0}\mathbf{u}_1 + b u_{2,0}\mathbf{u}_2 \}$, where we choose $u_{1,0} = u_{2,0} = 100$, while $a,\,b$ are random numbers in the interval $(0,\,1)$. 
Fig. \ref{fig:prismaticbeam}(b) shows the spectrogram of the unforced decay of the first modal coordinate for the trajectory with initial condition $\mathbf{x}(0) = [u_{1,0}\mathbf{u}_1,\,\mathbf{0}]$, where we observe that the first and the third harmonics dominate the response.

To generate data for testing, we simulate trajectories with initial conditions in the set $\mathcal{D}_{\mathrm{IC, \,test}}=0.95 \mathcal{D}_{\mathrm{IC, \,training}}$. Cubic-order models both for the SSM parametrization and for its reduced dynamics are sufficiently accurate to predict test trajectories, as shown in Fig. \ref{fig:prismaticbeam}c . Indeed, we obtain an optimal NMTE of $9.74$ \% on the test set, which does not reduce upon increasing the polynomial order further than 3. The reduced dynamics in the normal form parametrization takes the form
\begin{equation}\label{eq:nf_prismaticbeam}
	\begin{array}{rl}\dot{\rho}_{1} = &-0.01\rho_{1}-5.0739e-05 \rho^{3}_{1}+0.00049014 \rho_{1}\rho^{2}_{2}\\ &+\mathrm{Re}((0.031335-0.58438i) \rho^2_{1} \rho_{2} e^{i(-3 \theta_{1}+ \theta_{2})}),\\ 
\dot{\rho}_{2} = &-0.01\rho_{2}-0.0036841 \rho^{2}_{1}\rho_{2}+1.5171e-05 \rho^{3}_{2}\\ &+\mathrm{Re}((-0.0040419-0.51863i) \rho^{3}_{1} e^{i(+3 \theta_{1}- \theta_{2})}),\\ 
\dot{\theta}_{1} = &+3.8553+4.3244 \rho^{2}_{1}+1.1339 \rho^{2}_{2}\\ &+\mathrm{Im}((0.031335-0.58438i) \rho_{1} \rho_{2} e^{i(-3 \theta_{1}+ \theta_{2})}),\\ \dot{\theta}_{2} = &+12.4927+3.6813 \rho^{2}_{1}+1.8562 \rho^{2}_{2}\\ &+\mathrm{Im}((-0.0040419-0.51863i) \rho^{3}_{1} \rho^{-1}_{2} e^{i(+3 \theta_{1}- \theta_{2})}).
	\end{array}
\end{equation}
%
We also remark that \texttt{SSMTool} obtains a normal form of the type in \eqref{eq:nf_prismaticbeam}, but with potentially different coefficients from those of \texttt{SSMLearn}, since the normalizations and transformations present in the implementation influence their values.

In Fig. \ref{fig:prismaticbeam}(d,e), we compare the FRCs predicted using the forced version of our ROM~\eqref{eq:nf_prismaticbeam} with those obtained via \texttt{SSMTool} that have already been validated against the full system in \cite{Li2022a}. In summary, our SSM-based ROM, trained using unforced data, makes accurate predictions of the forced response and its bifurcations with respect to the forcing frequency in this internally resonant system.

\subsection{Von Kármán shells with and without 1:2 internal resonance}
\label{sec:exampleshells}
We now consider the shallow-arc example discussed in \cite{Jain2018b,Jain2021,Li2022a}. The model is a FE discretization of a geometrically nonlinear shallow shell structure, shown in Fig. \ref{fig:vonkarmanshellsconfig}(a), which is simply supported at the two opposite edges aligned along the $y$-axis of Fig. \ref{fig:vonkarmanshellsconfig}(a).
\begin{figure}[h!]
    \centering
    \includegraphics[width=1\textwidth]{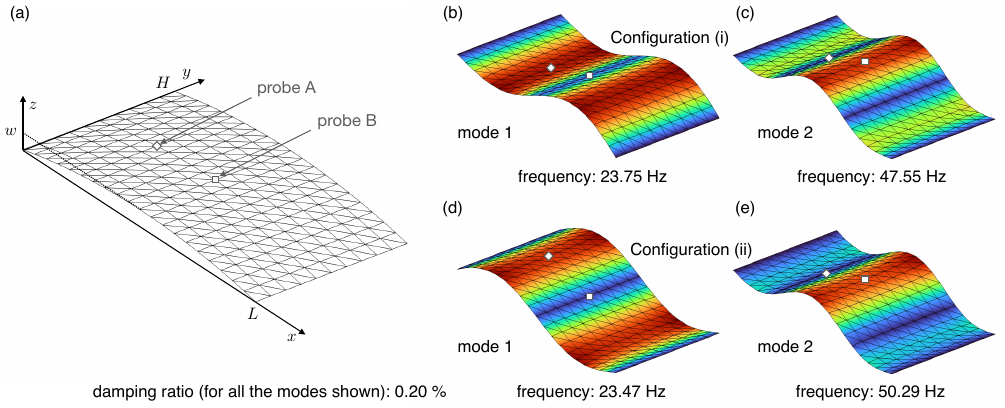}
    \caption{Plot (a) shows the finite element discretization of the shallow-arc reference position and the two probe points whose transverse displacements are denoted $q_A$ and $q_B$. Plots (b,c) show the displacement field for the first two conservative mode shapes of configuration (i) without internal resonance, while plots (d,e) show these shapes for configuration (ii) with $1:2$ internal resonance. The plots also show the natural frequencies in Hz and the damping ratios.}
    \label{fig:vonkarmanshellsconfig}
\end{figure}

 Following prior works~\cite{Jain2021,Li2022a}, we choose the material density as $2700$ [kg/m$^3$], Young's modulus as $70$ [GPa], Poisson's ratio as $0.33$, the length $L = 2$ [m], the width $H = 1$ [m] and the thickness $0.01$ [m]. We consider two different values for the curvature parameter $w$ (see Fig. \ref{fig:vonkarmanshellsconfig} (a)), that is, (i) $w = 0.1$ [m]~\cite{Jain2021} and (ii) $w = 0.041$ [m]. These two values lead to different resonance configurations: (i) without internal resonance~\cite{Jain2021} and (ii) with $1:2$ internal resonance between the first two modes~\cite{Li2022a}. 
 
 The model is discretized using flat, triangular shell elements having six degrees of freedom at each node. The discretized model, obtained via \cite{YetAnotherFEcode2020}, has 400 elements and $n = 1320$ DOFs. In both cases, we choose Rayleigh damping, where we tune the mass and stiffness coefficients to achieve a damping ratio of $0.20$ \% for the first two shell modes. To monitor our response predictions, we consider two probe points A and B, as shown in Fig. \ref{fig:vonkarmanshellsconfig}(a). Here, point A is near an antinode position for mode 1 and near a node position for mode 2. The converse holds for point B.  At these probe points, we record the transverse vibration amplitudes $q_A$ and $q_B$. 

We first consider configuration (i), which has the curvature parameter $w = 0.1$ [m]. Similarly to our first example (see section \ref{sec:examplebeam}), we reduce this nonresonant system using the slowest two-dimensional SSM, which is the smoothest nonlinear continuation of the vibration mode shown in \ref{fig:vonkarmanshellsconfig}b. To generate training data, we again follow the first initialization strategy, where we statically force the structure at probe A along the $z$-axis to achieve a deflection of $0.012$ [m].  Indeed, this deflection is more than $40$ \% stiffer compared to the linearized static response. 
\begin{figure}[h!]
    \centering
    \includegraphics[width=1\textwidth]{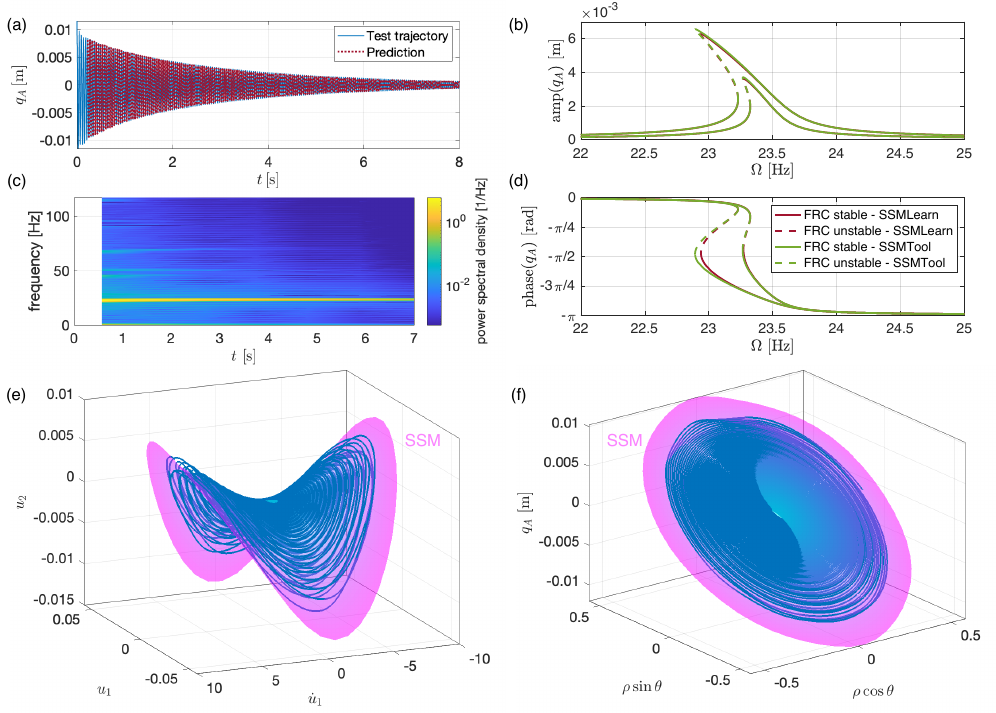}
    \caption{Plot (a) shows the test trajectory from the numerical simulation of the full model (blue curve) in terms of output displacement $q_A$ of the Von Kármán shell in configuration (i) along with its spectrogram in plot (c). Plot (a) also shows the prediction (dark red curve) of the \texttt{SSMLearn} data-driven model. Plots (b,d) respectively show the amplitude and phase for the output displacement $q_A$ of the forced responses for two forcing amplitude values, both for the equation-driven model of \texttt{SSMTool} and for the data-driven model of \texttt{SSMLearn}. Plots (e,f) show instead the parametrization of the two-dimensional SSM along with the training trajectories using two coordinate systems: physical coordinates in (e), while normal form and physical in (f).}
    \label{fig:vonkarmanshell2d}
\end{figure}
For obtaining a test trajectory, we use a slightly lower static load to achieve an initial condition with a deflection of $0.01$ [m], as shown in Fig. \ref{fig:vonkarmanshell2d}a. The spectrogram for this trajectory is shown in Fig. \ref{fig:vonkarmanshell2d}(c), where we observe a strong component near the first natural frequency.  Based on the training trajectory, we compute our SSM-based ROM using polynomials up to order $7$. The reduced dynamics in normal form reads
\begin{equation}\label{eq:nf_vkshelli}
	\begin{array}{rl}\dot{\rho} = &-0.29491\rho-1.2539\rho^{3}+2.6176 \rho^{5}+1.0114 \rho^{7},\\ \dot{\theta} = &+147.4549-35.1372 \rho^{2}+24.7303\rho^{4}+8.5609 \rho^{6}.
	\end{array}
\end{equation}
We validate this unforced ROM against the test trajectory (see  Fig. \ref{fig:vonkarmanshell2d}(a)), where we obtain an optimal NMTE of $6.13$ \%. Fig. \ref{fig:vonkarmanshell2d}(e,f) captures the nonlinear geometry of this autonomous SSM  in two coordinate systems. Plot (e) shows the SSM and the training trajectory attracted to the SSM in the modal coordinates $u_1$, $\dot{u}_1$ and $u_2$, while plot (f) shows them in the polar normal form coordinates $\rho$ and $\theta$ vs. the physical amplitude $q_A$.

Next, we add time-periodic forcing to the above ROM by sinusoidally forcing point A with amplitudes of $10$ and $20$ [N]. We predict the FRCs at these amplitude levels using our SSM-based ROM obtained via \texttt{SSMLearn}, where we expect a nonlinear forced response of softening type~\cite{Jain2021}. In Fig. \ref{fig:vonkarmanshell2d}(b,d), we observe that our data-assisted FRC predictions agree with the equation-based FRC predictions of \texttt{SSMTool}~\cite{Jain2021}. 

%
\begin{figure}[h!]
    \centering
    \includegraphics[width=1\textwidth]{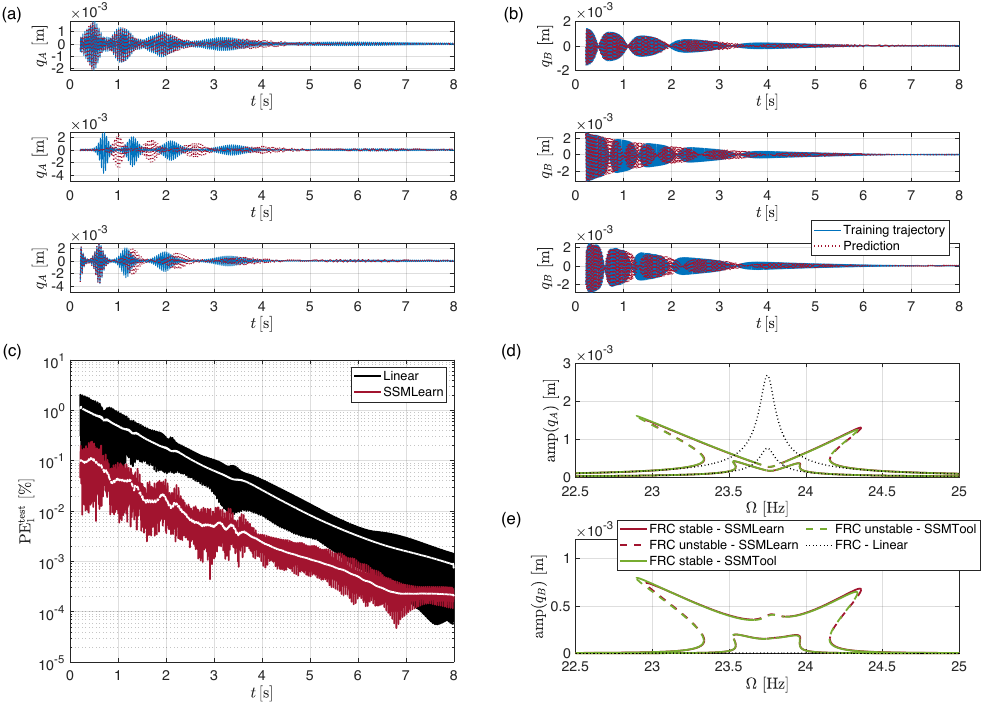}
    \caption{Plot (a) and (b) shows the three training trajectories in terms of the output displacements $q_A$ and $q_B$ for the Von Kármán shell in configuration (ii), i.e., in 1:2 internal resonance. The blue curves depict numerical simulations, while dark red ones model predictions. The trajectories at the top plot in both plots are initialized at the first mode, those in the middle one on the second mode, while the bottom plot are initialized with a linear combination of modes. Plot (c) shows the 1-step prediction error in modal coordinates normalized by the maximum amplitude value for the linear model (black curve) and for the \texttt{SSMLearn} data-driven model (dark red curve). Plots (d,e) show the amplitudes for $q_A$ and $q_B$ of forced responses computed via \texttt{SSMTool} (green) and \texttt{SSMLearn} (dark red) for two forcing amplitude values, also including the linear one in black.}
    \label{fig:vonkarmanshell4d}
\end{figure}

We now consider configuration (ii) with the curvature parameter $w = 0.041$ [m], which results in an approximate $1:2$ internal resonance. In Fig. \ref{fig:vonkarmanshellsconfig}(d,e), we present the mode shapes and the frequencies of the first two modes. To reduce this internally resonant structure, we compute the four-dimensional SSM associated to the resonant modes via \texttt{SSMLearn}. Similarly to our second example (see section \ref{sec:exampleprismatic}), we use the modal initialization strategy of Section \ref{sec:initcond} to generate training and testing trajectories. Specifically, we generate three trajectories for training and three for testing. Each of the two sets of trajectories are obtained by simulating one initial condition along the first mode, one along the second mode, and one along a random convex combination of the two mode shapes, as discussed in Section \ref{sec:exampleprismatic}. We plot the three training trajectories at the probe points $q_A$, $q_B$ in Figs. \ref{fig:vonkarmanshell4d}(a,b). 

Using the training data, we compute a cubic-order, SSM-based ROM. The polar vector field describing the normal form of the reduced dynamics is too long to be reported here, but can be found together with this example in the openly available repository of \texttt{SSMLearn}. 
We compare the predictions of this unforced ROM on the training set, as shown in  Fig. \ref{fig:vonkarmanshell4d} (a,b), where we observe pointwise discrepancies between the ROM predictions and the data. Despite these discrepancies, we obtain a relatively low NMTE of $7.19$ \% on the test data set. As an additional validation step, we compute the 1-step prediction error $\mathrm{PE}_1$ in the test trajectories for our ROM and compare it to that of the linearized ROM. Indeed, this comparison in Fig. \ref{fig:vonkarmanshell4d}(c) shows that $\mathrm{PE}_1$ for our ROM is one order of magnitude lower than that of the linearized ROM. 

We remark that the prediction discrepancies observed in Fig. \ref{fig:vonkarmanshell4d} (a,b) may not necessarily indicate poor training, since simulations of nonlinear systems are sensitive to initial conditions and may diverge over long time scales. Minimizing the error on individual training trajectories in such cases can result in overfitting. A more reliable indicator of good training for the ROM is an accurate prediction of attracting/hyperbolic invariant sets in the nonlinear system, such as stable/unstable periodic orbits, which we consider next. 

We now make FRC predictions using our SSM-based ROM for the same forcing as in configuration (i) except with forcing amplitudes of $2$ [N] and $7$ [N]. In Fig.~\ref{fig:vonkarmanshell4d}(d,e), we observe that the data-assisted predictions of \texttt{SSMLearn} model match closely those obtained using the cubic-order model of \texttt{SSMTool}~\cite{Li2022a}.  In these plots, we also contrast our predictions against the linearized response, which does not exhibit any phenomenon arising from the resonance coupling. 

While we compare our results to those obtained from the equation-driven ROMs via  \texttt{SSMTool}, we note that the validation of FRC computations in both these configurations against full system simulations (e.g., via shooting, collocation, and harmonic balance methods) have already been performed in \cite{Jain2021,Li2022a}.

\subsection{Von Kármán plate in 1:1 internal resonance}

As an additional example of a two-dimensional structure, we discuss the nonlinear vibrations of a simply-supported, square, von Kármán plate model,  originally proposed and validated in \cite{Li2022a}. In this configuration, classical linear plate vibrations give an exact $1:1$ internal resonance between the second and third plate modes (see Fig. \ref{fig:vonkarmanplate}(a-d), where the first four modes are illustrated). Interestingly, the SSM related to these two modes is not the slowest SSM, arising from the continuation of the first bending mode. With this example, we aim to demonstrate the effectiveness of our approach in identifying intermediate SSMs \cite{Haller2016} as well.
\begin{figure}[h!]
    \centering
    \includegraphics[width=1\textwidth]{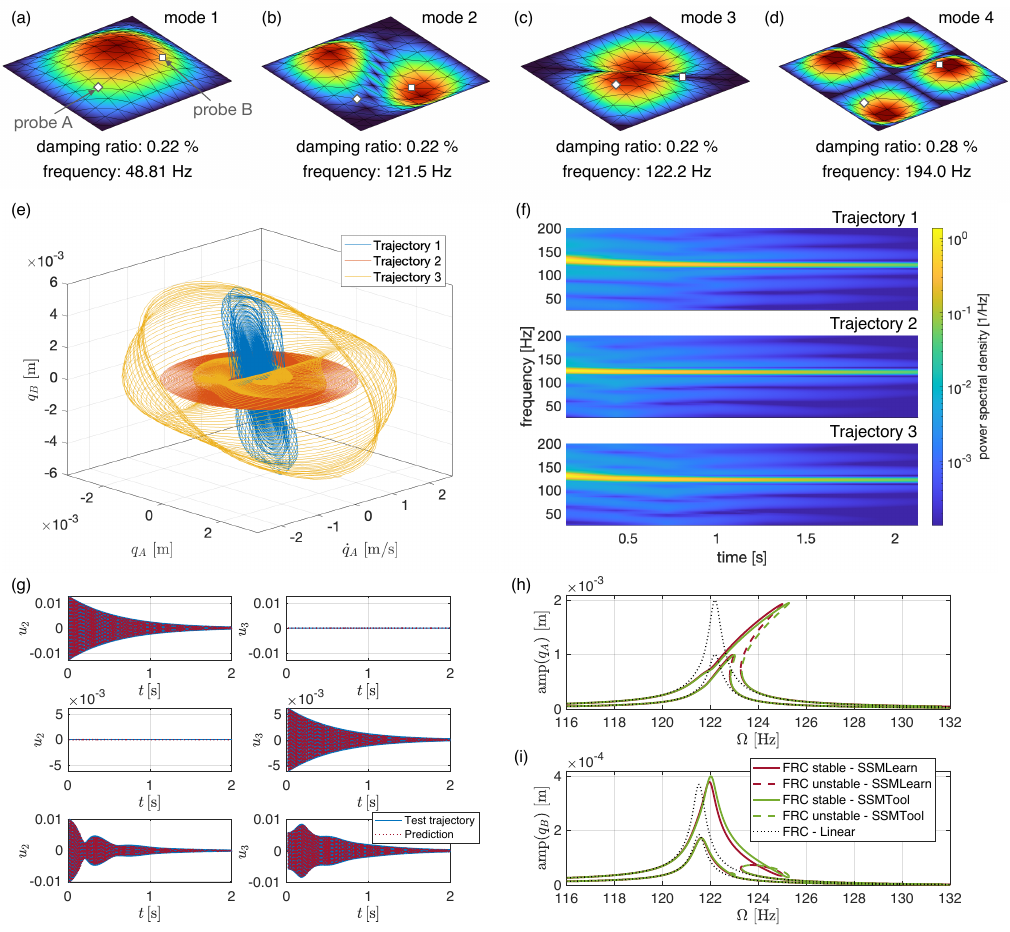}
    \caption{Plots (a-d) show the mode shape as well as the damping ratios and frequencies for the first four plate modes. Plot (a) also indicates the grid position for two output probe nodes A (diamond) and B (square). Plot (e) shows the three decaying trajectories used for model training in physical coordinates, using the transverse displacements $q_A$, $q_B$ and the velocity of the former, whereas plot (f) shows the trajectories spectrogram of $q_A$. Plot (g) illustrates the test trajectories in the two modal coordinates obtained from the full-order model (in blue) and their prediction (dark red) from the data-driven reduced-order model. Plot (h,i) respectively show the amplitudes for $q_A$ and $q_B$ of forced responses computed via \texttt{SSMTool} (green) and \texttt{SSMLearn} (dark red) for two forcing amplitude values, also including the linear forced responses in black.}
    \label{fig:vonkarmanplate}
\end{figure}

By uniformly dividing the plate sides into ten subintervals, $200$ triangular elements are used to discretize the plate, resulting in $606$ DOFs. The FE model is built using the open-source FE package \cite{YetAnotherFEcode2020}. Following~\cite{Li2022a}, we choose the material density as $2700$ [kg/m$^3$], Young's modulus as $70$ [GPa], Poisson's ratio as $0.33$, plate length and width as $1$ [m], and its thickness as $0.01$ [m]. We also choose a Rayleigh damping model with $\mathbf{C} = \mathbf{M} + 4\mathrm{e}{-6}\,\mathbf{K}$, see \cite{Li2022a} for more details. With these system parameters, the mesh and the first four mode shapes, frequencies and damping ratios are illustrated in Fig. \ref{fig:vonkarmanplate}(a-d).  To monitor our response predictions, we consider two probe points A and B, as shown in Fig. \ref{fig:vonkarmanplate}(a-d). We note here that point A is near a node position for mode 2 and near an antinode position for mode 3. The converse holds for point B.  At these probe points, we record the transverse vibration amplitudes $q_A$ and $q_B$.


We aim to model the intermediate, four-dimensional SSM related to the second and third bending modes of the plate, which exhibit an approximate $1:1$ resonance in our FE model. To obtain training trajectories, we again adopt the second initialization strategy described in Section~\ref{sec:initcond}. Similarly to the internally resonant examples of Sections~\ref{sec:exampleprismatic} and \ref{sec:exampleshells}, we use three training trajectories, shown in Fig. \ref{fig:vonkarmanplate}(e), initialized at appropriate amplitudes (i) along mode 2, (ii) along mode 3, and (iii) along a random linear combination of modes 2 and 3.  Inspecting the spectrograms of these trajectories in Fig.~\ref{fig:vonkarmanplate}(f), we can verify that the dominant frequency presence is that of the second and third mode. 

Once again, the test trajectories are initialized similarly but at 2\%  lower amplitudes than the training trajectories. Using a cubic-order parametrization for the SSM and a quintic-order parametrization for its reduced dynamics, we obtain a ROM via \texttt{SSMLearn} with an optimal NMTE of approximately $5$ \%  on the test data. The polar vector field describing the normal form of the reduced dynamics in terms of the polar amplitudes  $\rho_1$, $\rho_2$ and the phase difference $\theta_1-\theta_2$, is too long to be reported here, but can be found together with this example in the openly available repository of \texttt{SSMLearn}. In Fig.~\ref{fig:vonkarmanplate}(g), we obtain good agreement of our test trajectory predictions with the test data along the second and third modal coordinates $u_2$, $u_3$. 

For studying the forced response, we apply a time-periodic (sinusoidal), concentrate load in the transverse direction at point A, shown in Fig.~\ref{fig:vonkarmanplate}(a-d). As this point is near an antinode for mode 3 and a node for mode 2, we expect the modal component of the applied force along mode 3 to be larger than that along mode 2. Due to the $1:1$ resonance, however, we still expect non-trivial interactions between the two modes in the forced response.

We make FRC predictions using our SSM-based ROM at forcing amplitudes of 20 [N] and 40 [N].  In Fig. \ref{fig:vonkarmanplate}(h,i), we observe that the predictions of our data-assisted ROM, which is not trained on any forced data, are consistent with the equation-based predictions obtained using the quintic-order model of \texttt{SSMTool}~\cite{Li2022a}.  In these plots, we also contrast our predictions against the linearized response, which does not exhibit any phenomenon arising from the resonance coupling. Once again, we remark that the validation of these FRC computations against full system simulations (e.g., via shooting, collocation and harmonic balance methods) have already been performed  in \cite{Li2022a}.


\subsection{High-dimensional FE model of a MEMS device}
\label{sec:mems}
\begin{figure}[h!]
    \centering
    \includegraphics[width=1\textwidth]{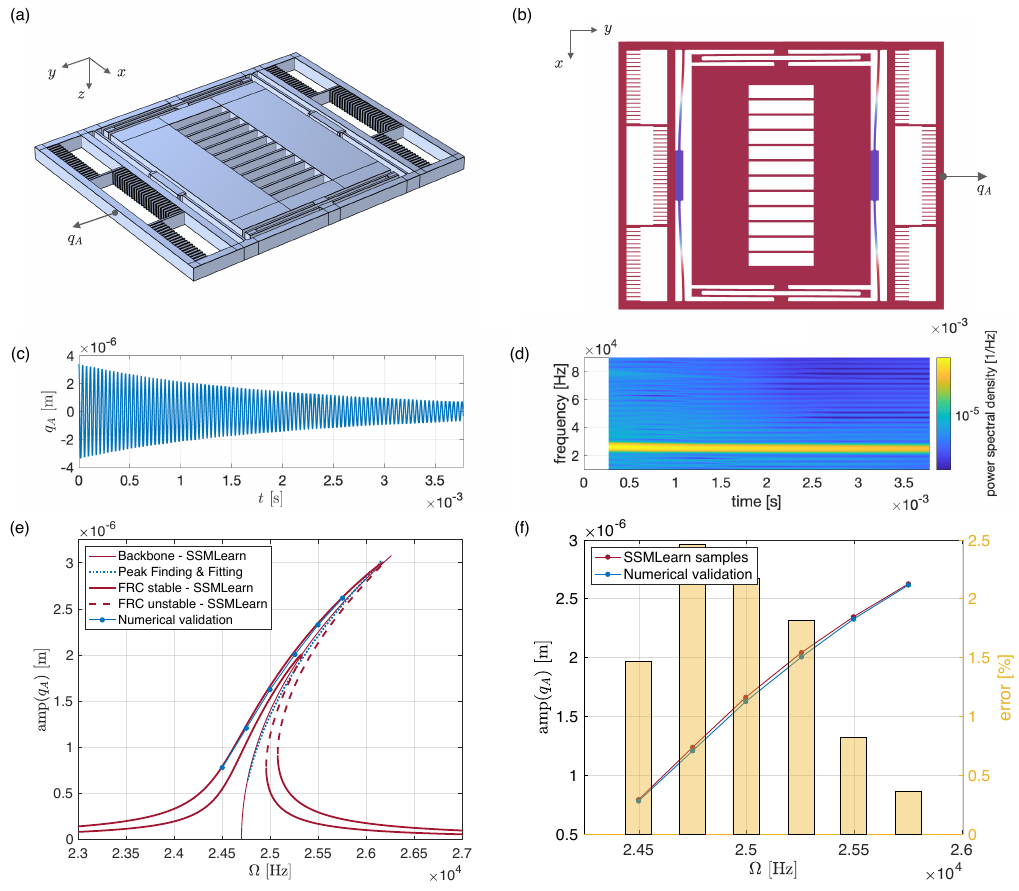}
    \caption{Plots (a-d) show the MEMS device, with the design (a) and the normalized displacement field for the first mode shape (b). In both plots we indicate the reference output amplitude $q_{A}$ as the displacement along the $y$-axis of the outer frame. Plot (c) depicts the training data obtained from the commerical FE code along with its spectrogram in (d) for the output amplitude $q_{A}$. Plot (e) shows the amplitudes for $q_A$ of forced responses computed via \texttt{SSMLearn} for two forcing amplitude values, also including the backbone curve of the \texttt{SSMLearn} model and that obtained by signal processing using the method Peak Finding and Fitting \cite{Jin2020}. Plot (f) depicts the error between some samples of the forced response for the highest forcing amplitude predicted by \texttt{SSMLearn} and their numerical validation.}
    \label{fig:mems}
\end{figure}

As our final example, we study a large FE model representing a MEMS gyroscope prototype~\cite{Marconi2021}, composed of a frame and a proof mass, as shown in Fig.~\ref{fig:mems}a. The overall physical dimensions of the device are 600x600x20 [$\mu$m] The frame is connected to the ground via four flexible beams, allowing its displacement in the y-direction. The proof mass is connected to the frame using two flexible folded beams, which allow the proof mass to move in the $x$-direction. The first vibration mode of the structure (see Fig.~\ref{fig:mems}b) depicts a synchronous motion of the frame and the proof mass in the $y$-direction. In the presence of an external angular rotation, the oscillation along the first mode results in a relative velocity that will generate a Coriolis force on the proof mass along the $x$-direction. This Coriolis force excites the second vibration mode, which comprises the sole motion of the proof mass along the x direction. The parallel plate capacitors within the proof mass detect this motion along the x direction and convert it into an angular velocity measurement. This prototype was designed to exhibit a strongly nonlinear forced response along the first mode so that the drive frequency near the first mode can be tuned to match the sense frequency along the second mode (see~\cite{Marconi2021} for details). This \emph{mode-matched operation} strategy enhances sensing by exploiting the amplification provided by both modes. Hence, in the present analysis, we are interested in the nonlinear response of the first mode (drive mode). To this end, we aim to construct a two-dimensional ROM based on the SSM along the drive mode.

For FE simulations of the full system in this example,  we use the commercial software, COMSOL Multiphysics\textsuperscript{\textregistered} 6.0. Our FE mesh is composed of 28,084 hexahedral elements and 8,636 prismatic (wedge) elements, resulting in 1,029,456 degrees of freedom. Due to the relatively low damping found in MEMS applications, convergence of full system simulations to a steady-state is computationally challenging. In the present example, we consider moderately low damping with a quality factor $Q=200$ of the first mode. To achieve this quality factor, we have employed proportional damping $\mathbf{C} = \alpha \mathbf{M} + \beta \mathbf{K}$, with $\alpha = -829.88$ and $\beta=6.6679\times 10^{-8}$. 


To generate a decaying simulation trajectory, we initialize the system at rest with a displacement along the first vibration mode such that the maximum deflection attained is 3.4 $\mu$m. We simulate the system for a timespan $T=100T_0$, where $T_0$ is the time period of the first undamped natural frequency. We perform time integration in COMSOL Multiphysics\textsuperscript{\textregistered} 6.0 using the generalized alpha solver with the strict time-stepping method, where we choose the time step $dt=T_0/100$, and a relative tolerance of $10^{-5}$. With these settings, the simulation required 10,000 time steps and took about 110 hours. Fig.~\ref{fig:mems} c shows the displacement $q_A$ of the training trajectory along the $y$ -axis at a specific location on the outer frame (see Figs.~\ref{fig:mems}a,b).

In addition to large simulation times, memory requirements pose constraints to storing the trajectory data due to the high-dimensionality of the model. Indeed, each time snapshot of a full solution vector requires approximately 8.2 MB of storage. This would result in an exorbitant memory requirement of approximately 82,000 MB to save all snapshots for the 10,000 time steps of the simulation trajectory. However, this is not an issue for our approach, because full solution snapshots are required only for learning the SSM parametrization, where a high sampling frequency is not useful. Instead of the sampling rate of 100 snapshots per period used for time integration, we choose to store full solution snapshots for only 5 samples per period. This strategy reduces the memory requirements by a factor of 20 and results in similar accuracy in estimating the SSM parametrization. On the other hand, high-resolution snapshots are necessary to have a good estimation of the time derivative of the reduced states, which is used for learning the reduced dynamics. Storing high-resolution snapshots for the reduced states, however, does not pose any memory constraints because of their low dimensionality. Therefore, we use different sampling frequencies for storage depending on variables: the reduced variables are densely sampled in time whereas the full solution vectors are sampled sparsely.




We train our model on the trajectory shown in Fig.~\ref{fig:mems}c. Figure~\ref{fig:mems}d shows the spectrogram for this trajectory along the coordinate $q_A$, where we observe an initial presence of a third harmonic besides the strong signature of the principal frequency. We use a cubic-order model for the geometry and a septic-order model for the reduced dynamics, whose normal form approximation is obtain via SSMLearn as
\begin{equation}\label{eq:nf_mems}
	\begin{array}{rl}\dot{\rho} = &  -775.93\rho -570.52\rho^{3}-109.86\rho^{5}-22.058\rho^{7},\\ \dot{\theta} = & 155183.20+31794.03 \rho^{2}-5697.20 \rho^{4}-1094.84 \rho^{6},\end{array}
\end{equation}
and the resulting NMTE on the observable $q_A$ along the training trajectory amounts to nearly 7\%. We observe that there is a good agreement between the backbone predicted by the model~\eqref{eq:nf_mems} and that extracted by Peak Finding and Fitting in Fig.~\ref{fig:mems}(e). 

We then add forcing, where we use a nodal force at the location of $q_A$ to harmonically excite the frame along the y axis. Finally, we use our SSM-based ROM to make forced response predictions at two forcing amplitudes (approximately 1.94 $\mu$N and 3.32 $\mu$N), where the highest displacement amplitude is predicted around 3 $\mu$m, as shown in Fig.~\ref{fig:mems}(e). The total time spent on the construction of this SSM-based ROM and making forced response predictions was about 5 minutes and 40 seconds.

\begin{table}
    \centering
    \begin{tabular}{|c|c|} \hline 
         Forcing frequency [Hz]&  Simulation time [hours]\\ \hline 
         24,498&  43.67
\\ \hline 
         24,750&  45.01
\\ \hline 
         24,996&  49.19
\\ \hline 
         25,255&  50.10
\\ \hline 
         25,496&  53.82
\\ \hline 
         25,751&  53.09
\\ \hline
    \end{tabular}
    \caption{Computation times for numerical validation of 6 points on the FRC, shown in Fig.~\ref{fig:mems}f, using COMSOL Multiphysics\textsuperscript{\textregistered} 6.0. For comparison purposes, the total time spent on the construction of the SSM-based ROM for the MEMS and making forced response predictions was about 5 minutes and 40 seconds.}
    \label{tab:validation_times}
\end{table}

To validate our predictions, we sample the upper branch of the highest amplitude FRC in Fig. \ref{fig:mems}(e)  at six different frequencies near resonance, as shown in Table~\ref{tab:validation_times}. For these six attractors, we use the initial condition on the periodic response predicted by our SSM-based ROM as input for the time integration of the forced system via  COMSOL Multiphysics\textsuperscript{\textregistered}. We simulate each initial condition for 50 cycles of forcing and expect that the simulated trajectories will remain close to our predicted periodic responses. We plot the NMTE in Fig. \ref{fig:mems}(f), where we observe that the error in our SSM-based prediction relative to the full system simulations is lower than 2.5 \%. The time spent on individual validations is recorded in Table ~\ref{tab:validation_times}.

\section{Conclusion}
In this work, we have developed a data-assisted approach for nonintrusive model reduction of nonlinear mechanical systems based on SSM theory. Our approach uses unforced simulation data of an initially displaced structure to fit an SSM of appropriate dimensionality. Specifically, for a nonresonant structure, a two-dimensional SSM around the fundamental natural frequency governs the nonlinear response. For internally resonant systems, however, higher-dimensional SSMs are necessary, based on the modes participating in the internal resonance, as we have demonstrated. Thus, we have developed a systematic procedure for identifying SSMs and their reduced dynamics in nonresonant as well as internally resonant systems. 

We have shown that the SSM and its reduced dynamics, which are learned from unforced data, can make highly accurate forced response predictions for the full nonlinear system. This is a direct result of SSM theory, which postulates the persistence of SSM under the addition of external forcing to the full system. We have demonstrated accurate predictions of nonlinear forced response for FE models of various mechanical structures comprising beams, shells, and three-dimensional continuum-based elements. 

For very high-dimensional systems, the main computational bottleneck for obtaining these ROMs is the offline cost associated to the FE simulation of decaying trajectories. However, these offline costs of unforced simulations are marginal compared to the comprehensive forced response predictions that are made using the SSM-based ROM. Indeed, for our MEMS resonator example containing more than one million DOFs, the offline cost of obtaining the training trajectory for this ROM was nearly 4.5 days. Using this training data, the SSM-based ROM and the FRC were computed in less than 6 minutes. However, validating the predicted FRC at only six points via full system simulations took more than 12 days.

We fully expect that the data-assisted, nonintrusive SSM reduction developed here will perform equally well under parametric resonance~\cite{Thurnher2023}, which will be pursued in future work.
\section*{Conflicts of interest}
The authors declare conflicts of interest.

\section*{Funding}
No funding was received for conducting this study.

\section*{Data availablity}
All examples are programmed using the open-source package, SSMLearn, available at \url{https://github.com/haller-group/SSMLearn}. The full system simulation data for the examples is available upon request from the authors. 

\begin{appendix}

\section{Non-modal graph style parametrizations}\label{app:nmgs}
In principle, we can choose a coordinate chart as an arbitrary projection to the reduced coordinate defined by a matrix $\mathbf{W}_{0}$, cf. Fig. \ref{fig:Fig2}. By the invertibility of coordinate chart and parametrization, we have that $\mathbf{W}_0\mathbf{V}_0 = \mathbf{I}$, and, by the linear invariance for the parametrization, $\mathbf{V}_0$ must span the spectral subspace $E^{2m}$, i.e., $\mathbf{V}_{0} = \mathbf{V}_{E^{2m}}\mathbf{P}^{-1}$. If we multiply both sides of this identity by $\mathbf{W}_{0}$, we have get the matrix $\mathbf{P}=\mathbf{W}_{0}\mathbf{V}_{E^{2m}}$. This practically means that we are free to choose the reduced-coordinate just using the linear projection via $\mathbf{W}_{0}$, as long as the matrix $\mathbf{P}$ is not singular. This guarantees our parametrization style to be valid at least for a small neighborhood of the origin. The linear part of the dynamics is $\mathbf{R}_{0} = \mathbf{P} \mathbf{R}_{E^{2m}}\mathbf{P}^{-1}$. This concept is useful for example for cases in which one desires to use physically-meaningful coordinates in the parametrization. An example is the POD modes in the vortex shedding example of \cite{Cenedese2022a}, the energy-type variables in \cite{Kaszas2022}. In our context of mechanical system, a possible choice is to use $m$ generic displacement-velocity pairs of the degrees of freedom (or their linear combinations), as long as they are far from being nodes of all the mode shapes related to the SSM. Here, the autonomous reduced dynamics is a reduced mechanical system for the chosen degrees of freedom $\mathbf{q}_m$, i.e., $\mathbf{y} = (\mathbf{q}_m,\dot{\mathbf{q}}_m)$, and such a form can be really insightful from a physical viewpoint. As an example for a two-dim. SSM, if the first degree of freedom is not a node of the mode shape related to the SSM, then $\mathbf{W}_{0}\in\mathbb{R}^{2\times 2n}$ can be chosen to be the projection to the first degree of freedom and its velocity, i.e., 
\begin{equation}\label{eq:nonmodalex}
\begin{cases}
	(\mathbf{W}_{0})_{j,k} = 1 & \mathrm{for\,\,} (j,k)= (1,1), \, (2,n+1) \\
    (\mathbf{W}_{0})_{j,k} = 0 & \mathrm{otherwise.}
\end{cases}
\end{equation}

However, inserting the forcing in the model is not as straightforward as in the case of a modal projection. Indeed, the coordinate chart we choose would not, in general, satisfy the linear invariance $\mathbf{W}_{0}\mathbf{A}=\mathbf{R}_{0}\mathbf{W}_{0}$. So, one needs attention when including forcing, as, for example, Eqs. (\ref{eq:resforcing},\ref{eq:paramforcingnonmodal}) do not hold. By writing $\dot{\mathbf{v}}_{1}(\boldsymbol{\Omega}t) = D\mathbf{v}_{1}(\boldsymbol{\Omega}t)\boldsymbol{\Omega} $, in this case we have that
\begin{equation}
	\begin{array}{l}
	\displaystyle \mathbf{r}_{1}(\boldsymbol{\Omega}t)=\mathbf{W}_{0}\mathbf{A}\mathbf{v}_{1}(\boldsymbol{\Omega}t)+ \mathbf{W}_{0}\mathbf{f}_{1}(\mathbf{0},\boldsymbol{\Omega}t;0), \\
	\displaystyle \dot{\mathbf{v}}_{1}(\boldsymbol{\Omega}t) = (\mathbf{I} - \mathbf{V}_{0}\mathbf{W}_{0})\mathbf{A}\mathbf{v}_{1}(\boldsymbol{\Omega}t) + (\mathbf{I} - \mathbf{V}_{0}\mathbf{W}_{0})\mathbf{f}_{1}(\mathbf{0},\boldsymbol{\Omega}t;0).
	\end{array}
\end{equation}
Therefore, forcing in the reduced dynamics is not simply its modal component, but its form feature additional terms to be derived by solving a linear ODE.
\begin{figure}[t]
    \centering
    \includegraphics[width=.8\textwidth]{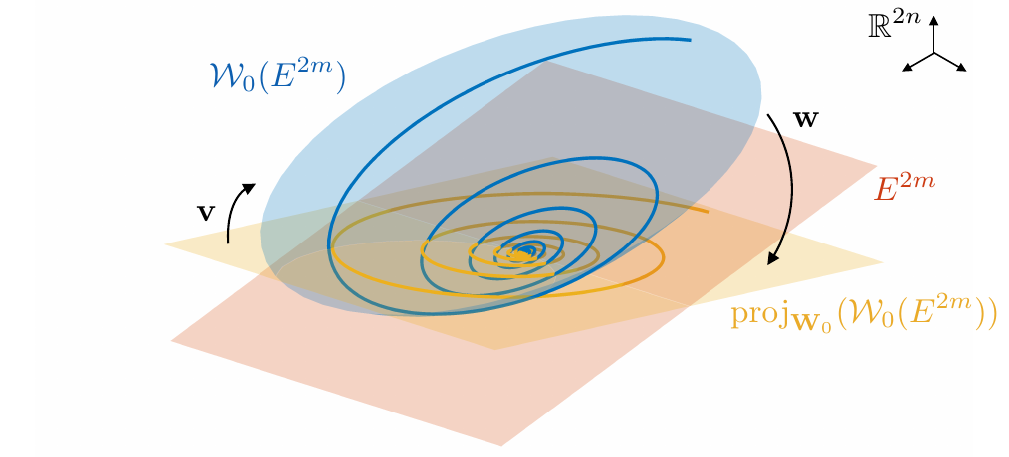}
    \caption{Illustration of parametrization of an autonomous invariant manifold $\mathcal{W}_0(E^{2m})$ (in blue, with a trajectory on it) using the projection to an arbitrary linear subspace space represented by a matrix $\mathbf{W}_{0}$ (in yellow, with the projected trajectory on it). This subspace may be different from $E^{2m}$ but it must be such that the matrix $\mathbf{W}_{0}\mathbf{V}_{E^{2m}}$ is not singular.}
    \label{fig:Fig2}
\end{figure}

\section{Properties of the regressed parametrization}\label{app:reg_para_prop}
Let $\mathbf{v}_{\mathrm{nl}}(\mathbf{y})=\mathbf{V}_{\mathrm{nl}}\mathbf{y}^{2:M}$ where $\mathbf{y}^{2:M}$ is the vector of all $n_{2:M}$ monomials from order 2 to $M$ in $2m$ variables, these being the components of $\mathbf{y}\in\mathbb{C}^{2m}$. The minimization problem of Eq. (\ref{eq:regressparared}) can be then rewritten as
\begin{equation}\label{eq:regressparamat}
	\mathbf{V}_{\mathrm{nl}\star} = \mathrm{arg}\min_{\mathbf{v}_{\mathrm{nl}}}\sum_{j=1}^{P}\left\Vert\mathbf{x}_{j} - \mathbf{V}_0\mathbf{y}_{j}-\mathbf{V}_{\mathrm{nl}}\mathbf{y}^{2:M}_j\right\Vert ^{2}.
\end{equation}
Let us define the matrix $\mathbf{Y}_{\mathrm{nl}}=[\mathbf{y}^{2:M}_1 \,\, \mathbf{y}^{2:M}_2 \,\, ... \,\, \mathbf{y}^{2:M}_P]\in\mathbb{C}^{n_{2:M}\times P}$.
\begin{proposition}
	If the rank of $\mathbf{Y}_{\mathrm{nl}}$ is equal to $n_{2:M}$, the optimal solution in Eq. (\ref{eq:regressparamat}) is unique and always such that $\mathbf{W}_0\mathbf{v}_{\mathrm{nl}}(\mathbf{y})\equiv \mathbf{0}$.
\end{proposition}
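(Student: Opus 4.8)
The plan is to recast the least-squares problem~\eqref{eq:regressparamat} in compact matrix form and then exploit the two linear constraints $\mathbf{W}_0\mathbf{V}_0 = \mathbf{I}$ and $\mathbf{y}_j = \mathbf{W}_0\mathbf{x}_j$ that are built into the graph-style parametrization. I would collect the data into matrices $\mathbf{X} = [\mathbf{x}_1\,\cdots\,\mathbf{x}_P]\in\mathbb{C}^{2n\times P}$ and $\mathbf{Y} = [\mathbf{y}_1\,\cdots\,\mathbf{y}_P] = \mathbf{W}_0\mathbf{X}\in\mathbb{C}^{2m\times P}$, together with $\mathbf{Y}_{\mathrm{nl}}$ as already defined. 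The objective in~\eqref{eq:regressparamat} then equals the squared Frobenius norm $\| \mathbf{X} - \mathbf{V}_0\mathbf{Y} - \mathbf{V}_{\mathrm{nl}}\mathbf{Y}_{\mathrm{nl}} \|_F^2$, a convex quadratic in the complex unknown $\mathbf{V}_{\mathrm{nl}}$.

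First I would write the normal equations for this complex linear least-squares problem. Stationarity yields $\mathbf{V}_{\mathrm{nl}\star}\,\mathbf{Y}_{\mathrm{nl}}\mathbf{Y}_{\mathrm{nl}}^{*} = (\mathbf{X} - \mathbf{V}_0\mathbf{Y})\,\mathbf{Y}_{\mathrm{nl}}^{*}$, where $(\cdot)^{*}$ denotes the complex-conjugate transpose. The $n_{2:M}\times n_{2:M}$ Gram matrix $\mathbf{Y}_{\mathrm{nl}}\mathbf{Y}_{\mathrm{nl}}^{*}$ is Hermitian positive semidefinite, and it is invertible precisely when $\mathbf{Y}_{\mathrm{nl}}$ has full row rank $n_{2:M}$, which is the stated hypothesis. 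Under this hypothesis the normal equations have the unique solution
\begin{equation*}
\mathbf{V}_{\mathrm{nl}\star} = (\mathbf{X} - \mathbf{V}_0\mathbf{Y})\,\mathbf{Y}_{\mathrm{nl}}^{*}\big(\mathbf{Y}_{\mathrm{nl}}\mathbf{Y}_{\mathrm{nl}}^{*}\big)^{-1},
\end{equation*}
which establishes the uniqueness claim.

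The decisive step is then to left-multiply this closed form by $\mathbf{W}_0$. Because the reduced coordinates are the projections $\mathbf{Y} = \mathbf{W}_0\mathbf{X}$ and because $\mathbf{W}_0\mathbf{V}_0 = \mathbf{I}$, the projected linear residual vanishes \emph{exactly} at every data point: $\mathbf{W}_0(\mathbf{X} - \mathbf{V}_0\mathbf{Y}) = \mathbf{W}_0\mathbf{X} - (\mathbf{W}_0\mathbf{V}_0)\mathbf{Y} = \mathbf{Y} - \mathbf{Y} = \mathbf{0}$. Consequently $\mathbf{W}_0\mathbf{V}_{\mathrm{nl}\star} = \mathbf{0}$, and since $\mathbf{v}_{\mathrm{nl}}(\mathbf{y}) = \mathbf{V}_{\mathrm{nl}\star}\mathbf{y}^{2:M}$ this gives $\mathbf{W}_0\mathbf{v}_{\mathrm{nl}}(\mathbf{y}) \equiv \mathbf{0}$, i.e. the second constraint of Eq.~\eqref{eq:invconstraints}.

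I do not expect a deep obstacle here: the result is essentially the observation that the monomial regressors contribute nothing to the residual as seen through $\mathbf{W}_0$, because that residual is already annihilated by $\mathbf{W}_0$ before any nonlinear fitting. The one point that warrants care is the passage to the normal equations over $\mathbb{C}$, where I would either split real and imaginary parts or invoke Wirtinger calculus to confirm that the conjugate-transpose form above is the correct stationarity condition. Thereafter the argument is purely algebraic and uses no property of the monomial map $\mathbf{y}\mapsto\mathbf{y}^{2:M}$ beyond the full-rank assumption on $\mathbf{Y}_{\mathrm{nl}}$.
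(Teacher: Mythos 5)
Your proposal is correct and follows essentially the same route as the paper: both form the matrix least-squares problem, write the unique closed-form solution $\mathbf{V}_{\mathrm{nl}\star} = (\mathbf{X}-\mathbf{V}_0\mathbf{Y})\,\mathbf{Y}_{\mathrm{nl}}^{*}(\mathbf{Y}_{\mathrm{nl}}\mathbf{Y}_{\mathrm{nl}}^{*})^{-1}$ under the full-rank hypothesis, and then left-multiply by $\mathbf{W}_0$ to see that $\mathbf{W}_0(\mathbf{X}-\mathbf{V}_0\mathbf{Y})=\mathbf{Y}-\mathbf{Y}=\mathbf{0}$. Your additional remark on verifying the complex stationarity condition via Wirtinger calculus is a careful touch the paper omits, but it does not change the argument.
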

\begin{proof}
	One can show that the $\mathbf{V}_{\mathrm{nl}\star}$ can be computed in closed form as the problem is a standard least squares minimization, thereby taking the form
\begin{equation}\label{eq:regressparamatsol}
	\mathbf{V}_{\mathrm{nl}\star} = (\mathbf{X}-\mathbf{V}_0\mathbf{Y}) \mathbf{Y}_{\mathrm{nl}}^H(\mathbf{Y}_{\mathrm{nl}}\mathbf{Y}_{\mathrm{nl}}^H)^{-1},
\end{equation}
where $\mathbf{X}=[\mathbf{x}_1 \,\, \mathbf{x}_2 \,\, ... \,\, \mathbf{x}_P]\in\mathbb{C}^{2n\times P}$ and $\mathbf{Y}=[\mathbf{y}_1 \,\, \mathbf{y}_2 \,\, ... \,\, \mathbf{y}_P]=\mathbf{W}_0\mathbf{X}\in\mathbb{C}^{2m\times P}$. If the rank of $\mathbf{Y}_{\mathrm{nl}}$ is equal to $n_{2:M}$, then the square matrix $\mathbf{Y}_{\mathrm{nl}}\mathbf{Y}_{\mathrm{nl}}^H$ is invertible and $\mathbf{V}_{\mathrm{nl}\star}$ in Eq. (\ref{eq:regressparamatsol}) is the unique least squares solution. Multiplying Eq. (\ref{eq:regressparamatsol}) by $\mathbf{W}_0$ and recalling that $\mathbf{W}_0\mathbf{V}_0=\mathbf{I}$, we conclude that
\begin{equation}\label{eq:regressparamatsol1}
	\mathbf{W}_0\mathbf{V}_{\mathrm{nl}\star} = (\mathbf{W}_0\mathbf{X}-\mathbf{W}_0\mathbf{V}_0\mathbf{Y}) \mathbf{Y}_{\mathrm{nl}}^H(\mathbf{Y}_{\mathrm{nl}}\mathbf{Y}_{\mathrm{nl}}^H)^{-1}= (\mathbf{W}_0\mathbf{X}-\mathbf{Y}) \mathbf{Y}_{\mathrm{nl}}^H(\mathbf{Y}_{\mathrm{nl}}\mathbf{Y}_{\mathrm{nl}}^H)^{-1}=\mathbf{0}.
\end{equation}
\end{proof}
We note that the same orthogonality relation also holds for weighted ridge regression, optionally available in \texttt{SSMLearn} \cite{Cenedese2022a}.

\section{External periodic forcing in the normal form}\label{app:s1}
Let us denote $\boldsymbol{\zeta} = \mathbf{P}^{-1}\mathbf{y} \in\mathbb{C}^{2m}$. Using these new coordinates, we have that
\begin{equation}
	\label{eq:coordcnorformzeta}
	\begin{array}{l}
		\boldsymbol{\zeta}= \mathbf{P}^{-1}\mathbf{h}(\mathbf{z},\boldsymbol{\Omega}t;\varepsilon)=\hat{\mathbf{h}}(\mathbf{z},\boldsymbol{\Omega}t;\varepsilon)=\mathbf{z} + \hat{\mathbf{h}}_{\mathrm{nl}}(\mathbf{z})-\varepsilon\mathbf{h}_{1}(\boldsymbol{\Omega}t), \\ \mathbf{z}= \mathbf{h}^{-1}(\mathbf{P}\boldsymbol{\zeta},\boldsymbol{\Omega}t;\varepsilon)=\hat{\mathbf{h}}^{-1}(\boldsymbol{\zeta},\boldsymbol{\Omega}t;\varepsilon)=\boldsymbol{\zeta} + \hat{\mathbf{h}}^{-1}_{\mathrm{nl}}(\boldsymbol{\zeta})+\varepsilon\mathbf{h}_{1}(\boldsymbol{\Omega}t), \\
		\dot{\boldsymbol{\zeta}}=\mathbf{P}^{-1}\mathbf{r}(\mathbf{P}\boldsymbol{\zeta},\boldsymbol{\Omega}t;\varepsilon)=\hat{\mathbf{r}}(\boldsymbol{\zeta},\boldsymbol{\Omega}t;\varepsilon)=\mathbf{R}_{E^{2m}}\boldsymbol{\zeta} + \hat{\mathbf{r}}_{\mathrm{nl}}(\boldsymbol{\zeta}) + \varepsilon\hat{\mathbf{r}}_{1}(\boldsymbol{\Omega}t),
	\end{array}
\end{equation}
where, for the case of periodic forcing introduced in Eq. (\ref{eq:externalperiodicforcing}), we have that $\hat{\mathbf{r}}_{1}(\Omega t) = \mathbf{g}^r \left( e^{i\Omega t}+e^{-i\Omega t} \right)$. By Fourier analysis, we can write
\begin{equation}
	\begin{array}{c}
	\mathbf{n}_{1}(\Omega t)
	= \mathbf{g}^{n+} e^{i\Omega t}+\mathbf{g}^{n-}e^{-i\Omega t} , \qquad \mathbf{g}^{n\pm} = \left( g^{n\pm}_1, \, g^{n\pm}_2, \,  ... \,, \, g^{n\pm}_m, \, \bar{g}^{n\pm}_1, \, \bar{g}^{n\pm}_2, \, ... \,, \, \bar{g}^{n\pm}_m \right)^\top, \\
	\mathbf{h}_{1}(\Omega t)
	= \mathbf{g}^{h+} e^{i\Omega t}+\mathbf{g}^{h-} e^{-i\Omega t}, \qquad \mathbf{g}^{h\pm} = \left( g^{h\pm}_1, \, g^{h\pm}_2, \,  ... \,, \, g^{h\pm}_m, \, \bar{g}^{h\pm}_1, \, \bar{g}^{h\pm}_2, \, ... \,, \, \bar{g}^{h\pm}_m \right)^\top,
	\end{array}
\end{equation}
and, by considering the $\mathcal{O}(\varepsilon)$-term in the conjugacy equation $D_\mathbf{z} \hat{\mathbf{h}}(\mathbf{z},\boldsymbol{\Omega}t;\varepsilon) \mathbf{n}(\mathbf{z},\boldsymbol{\Omega}t;\varepsilon) + $ \linebreak $D_{\boldsymbol{\Omega}t} \hat{\mathbf{h}}(\mathbf{z},\boldsymbol{\Omega}t;\varepsilon)\boldsymbol{\Omega} =\hat{\mathbf{r}}(\hat{\mathbf{h}}(\mathbf{z},\boldsymbol{\Omega}t;\varepsilon),\boldsymbol{\Omega}t;\varepsilon)$, we obtain
\begin{equation}
	\begin{array}{c}
	g^{n+}_k e^{i\Omega t}+g^{n-}_ke^{-i\Omega t} - i\Omega g^{h+}_k e^{i\Omega t}+i\Omega g^{h-}_k e^{-i\Omega t} =  -\lambda_{j_k} g^h_ke^{i\Omega t}-\lambda_{j_k} g^{h-}_k e^{-i\Omega t}  + g_k\left( e^{i\Omega t}+e^{-i\Omega t} \right) , \\
	\bar{g}^{n+}_k e^{i\Omega t}+\bar{g}^{n-}_ke^{-i\Omega t} - i\Omega \bar{g}^{h+}_k e^{i\Omega t}+i\Omega \bar{g}^{h-}_k e^{-i\Omega t} = -\bar{\lambda}_{j_k} \bar{g}^h_ke^{i\Omega t}-\bar{\lambda}_{j_k} \bar{g}^{h-}_k e^{-i\Omega t}  + \bar{g}_k\left( e^{i\Omega t}+e^{-i\Omega t} \right) ,
	\end{array}
\end{equation}
for $k = 1, \,2,\, ... \, , m$. If we solve for the coefficients of the change of coordinates we get
\begin{equation}
	g^{h\pm}_k = \frac{ g_k- g^{n\pm}_k}{\lambda_{j_k} \mp i\Omega }, \qquad \bar{g}^{h\pm}_k = \frac{ \bar{g}_k-\bar{g}^{n\pm}_k}{\bar{\lambda}_{j_k} \mp i\Omega },
\end{equation}
and we clearly see that when $k\in R$ there will be small denominator, as this correspond to resonant forcing for which $\mathrm{Im}(\lambda_{j_k}) \approx \Omega$. Hence, we choose to keep only such resonant forcing terms in the normal form dynamics, thereby leading to
\begin{equation}
	\begin{cases}
	\displaystyle g^{h+}_k = g^{n-}_k = \bar{g}^{n+}_k = \bar{g}^{h-}_k = 0, \,\, g^{n+}_k = g_k,\,\,\bar{g}^{n-}_k = \bar{g}_k,\,\, g^{h-}_k = \frac{g_k}{\lambda_{j_k} + i\Omega },\,\, \bar{g}^{h+}_k = \bar{g}^{h-}_k, \,\, \mathrm{if\,\,} k\in R \\ 
	\displaystyle g^{h\pm}_k = \frac{ g_k}{\lambda_{j_k} \mp i\Omega }, \,\, \bar{g}^{h\pm}_k = \frac{ \bar{g}_k}{\bar{\lambda}_{j_k} \mp i\Omega },\,\, \mathrm{otherwise}.	
	\end{cases}
\end{equation}
\end{appendix}
\bibliographystyle{naturemag}
\bibliography{biblio}

\begin{thebibliography}{10}
\expandafter\ifx\csname url\endcsname\relax
  \def\url#1{\texttt{#1}}\fi
\expandafter\ifx\csname urlprefix\endcsname\relax\def\urlprefix{URL }\fi
\providecommand{\bibinfo}[2]{#2}
\providecommand{\eprint}[2][]{\url{#2}}

\bibitem{Tiso2021}
\bibinfo{author}{Tiso, P.} \& \bibinfo{author}{Karamooz~Mahdiabadi, M.}
\newblock \emph{\bibinfo{title}{Modal methods for reduced order modeling}},
  \bibinfo{pages}{97--138} (\bibinfo{publisher}{De Gruyter},
  \bibinfo{year}{2021}).

\bibitem{GR2015}
\bibinfo{author}{Geradin, M.} \& \bibinfo{author}{Rixen, D.}
\newblock \emph{\bibinfo{title}{Mechanical Vibrations: Theory and Application
  to Structural Dynamics}} (\bibinfo{publisher}{John Wiley \& Sons},
  \bibinfo{year}{2015}), \bibinfo{edition}{3rd} edn.

\bibitem{Touze2014}
\bibinfo{author}{Touzé, C.}, \bibinfo{author}{Vidrascu, M.} \&
  \bibinfo{author}{Chapelle, D.}
\newblock \bibinfo{title}{Direct finite element computation of non-linear modal
  coupling coefficients for reduced-order shell models}.
\newblock \emph{\bibinfo{journal}{Computational Mechanics}}
  \textbf{\bibinfo{volume}{54}}, \bibinfo{pages}{567--580}
  (\bibinfo{year}{2014}).

\bibitem{Idelsohn1985a}
\bibinfo{author}{Idelsohn, S.} \& \bibinfo{author}{Cardona, A.}
\newblock \bibinfo{title}{A reduction method for nonlinear structural dynamic
  analysis}.
\newblock \emph{\bibinfo{journal}{Computer Methods in Applied Mechanics and
  Engineering}} \textbf{\bibinfo{volume}{49}}, \bibinfo{pages}{253--279}
  (\bibinfo{year}{1985}).
\newblock
  \urlprefix\url{https://www.sciencedirect.com/science/article/pii/0045782585901252?via%3Dihub}.

\bibitem{Weeger2016}
\bibinfo{author}{Weeger, O.}, \bibinfo{author}{Wever, U.} \&
  \bibinfo{author}{Simeon, B.}
\newblock \bibinfo{title}{On the use of modal derivatives for nonlinear model
  order reduction}.
\newblock \emph{\bibinfo{journal}{International Journal for Numerical Methods
  in Engineering}} \textbf{\bibinfo{volume}{108}}, \bibinfo{pages}{1579--1602}
  (\bibinfo{year}{2016}).
\newblock \urlprefix\url{http://doi.wiley.com/10.1002/nme.5267}.

\bibitem{Jain2017}
\bibinfo{author}{Jain, S.}, \bibinfo{author}{Tiso, P.},
  \bibinfo{author}{Rutzmoser, J.} \& \bibinfo{author}{Rixen, D.}
\newblock \bibinfo{title}{A quadratic manifold for model order reduction of
  nonlinear structural dynamics}.
\newblock \emph{\bibinfo{journal}{Computers \& Structures}}
  \textbf{\bibinfo{volume}{188}}, \bibinfo{pages}{80--94}
  (\bibinfo{year}{2017}).
\newblock
  \urlprefix\url{https://www.sciencedirect.com/science/article/pii/S0045794916311634?via%3Dihub}.

\bibitem{Mignolet2013}
\bibinfo{author}{Mignolet, M.}, \bibinfo{author}{Przekop, A.},
  \bibinfo{author}{Rizzi, S.} \& \bibinfo{author}{Spottswood, S.}
\newblock \bibinfo{title}{A review of indirect/non-intrusive reduced order
  modeling of nonlinear geometric structures}.
\newblock \emph{\bibinfo{journal}{Journal of Sound and Vibration}}
  \textbf{\bibinfo{volume}{332}}, \bibinfo{pages}{2437--2460}
  (\bibinfo{year}{2013}).
\newblock
  \urlprefix\url{https://www.sciencedirect.com/science/article/pii/S0022460X12008188}.

\bibitem{Lu2019}
\bibinfo{author}{Lu, K.} \emph{et~al.}
\newblock \bibinfo{title}{Review for order reduction based on proper orthogonal
  decomposition and outlooks of applications in mechanical systems}.
\newblock \emph{\bibinfo{journal}{Mechanical Systems and Signal Processing}}
  \textbf{\bibinfo{volume}{123}}, \bibinfo{pages}{264--297}
  (\bibinfo{year}{2019}).
\newblock \urlprefix\url{https://doi.org/10.1016/j.ymssp.2019.01.018}.

\bibitem{Carlberg2011}
\bibinfo{author}{Carlberg, K.}, \bibinfo{author}{Bou-Mosleh, C.} \&
  \bibinfo{author}{Farhat, C.}
\newblock \bibinfo{title}{Efficient non-linear model reduction via a
  least-squares petrov–galerkin projection and compressive tensor
  approximations}.
\newblock \emph{\bibinfo{journal}{International Journal for Numerical Methods
  in Engineering}} \textbf{\bibinfo{volume}{86}}, \bibinfo{pages}{155--181}
  (\bibinfo{year}{2011}).
\newblock
  \urlprefix\url{https://onlinelibrary.wiley.com/doi/abs/10.1002/nme.3050}.
\newblock \eprint{https://onlinelibrary.wiley.com/doi/pdf/10.1002/nme.3050}.

\bibitem{Champion2019}
\bibinfo{author}{Champion, K.}, \bibinfo{author}{Lusch, B.},
  \bibinfo{author}{Kutz, J.} \& \bibinfo{author}{Brunton, S.}
\newblock \bibinfo{title}{Data-driven discovery of coordinates and governing
  equations}.
\newblock \emph{\bibinfo{journal}{Proceedings of the National Academy of
  Sciences}} \textbf{\bibinfo{volume}{116}}, \bibinfo{pages}{22445--22451}
  (\bibinfo{year}{2019}).
\newblock \urlprefix\url{https://www.pnas.org/content/116/45/22445}.
\newblock \eprint{https://www.pnas.org/content/116/45/22445.full.pdf}.

\bibitem{Gobat2023}
\bibinfo{author}{Gobat, G.}, \bibinfo{author}{Baronchelli, A.},
  \bibinfo{author}{Fresca, S.} \& \bibinfo{author}{Frangi, A.}
\newblock \bibinfo{title}{Modelling the periodic response of
  micro-electromechanical systems through deep learning-based approaches}.
\newblock \emph{\bibinfo{journal}{Actuators}} \textbf{\bibinfo{volume}{12}}
  (\bibinfo{year}{2023}).
\newblock \urlprefix\url{https://www.mdpi.com/2076-0825/12/7/278}.

\bibitem{Muravyov2003}
\bibinfo{author}{Muravyov, A.} \& \bibinfo{author}{Rizzi, S.}
\newblock \bibinfo{title}{Determination of nonlinear stiffness with application
  to random vibration of geometrically nonlinear structures}.
\newblock \emph{\bibinfo{journal}{Computers \& Structures}}
  \textbf{\bibinfo{volume}{81}}, \bibinfo{pages}{1513--1523}
  (\bibinfo{year}{2003}).
\newblock
  \urlprefix\url{https://www.sciencedirect.com/science/article/pii/S0045794903001457}.

\bibitem{Perez2014}
\bibinfo{author}{Perez, R.}, \bibinfo{author}{Wang, X.} \&
  \bibinfo{author}{Mignolet, M.}
\newblock \bibinfo{title}{{Nonintrusive Structural Dynamic Reduced Order
  Modeling for Large Deformations: Enhancements for Complex Structures}}.
\newblock \emph{\bibinfo{journal}{Journal of Computational and Nonlinear
  Dynamics}} \textbf{\bibinfo{volume}{9}}, \bibinfo{pages}{031008}
  (\bibinfo{year}{2014}).
\newblock \urlprefix\url{https://doi.org/10.1115/1.4026155}.
\newblock
  \eprint{https://asmedigitalcollection.asme.org/computationalnonlinear/article-pdf/9/3/031008/6106045/cnd\_009\_03\_031008.pdf}.

\bibitem{Karamooz2021}
\bibinfo{author}{{Karamooz Mahdiabadi}, M.}, \bibinfo{author}{Tiso, T.},
  \bibinfo{author}{Brandt, A.} \& \bibinfo{author}{Rixen, D.}
\newblock \bibinfo{title}{A non-intrusive model-order reduction of
  geometrically nonlinear structural dynamics using modal derivatives}.
\newblock \emph{\bibinfo{journal}{Mechanical Systems and Signal Processing}}
  \textbf{\bibinfo{volume}{147}}, \bibinfo{pages}{107126}
  (\bibinfo{year}{2021}).
\newblock
  \urlprefix\url{https://www.sciencedirect.com/science/article/pii/S0888327020305124}.

\bibitem{Haller2017}
\bibinfo{author}{Haller, G.} \& \bibinfo{author}{Ponsioen, S.}
\newblock \bibinfo{title}{Exact model reduction by a slow-fast decomposition of
  nonlinear mechanical systems}.
\newblock \emph{\bibinfo{journal}{Nonlinear Dynamics}}
  \textbf{\bibinfo{volume}{90}}, \bibinfo{pages}{617--647}
  (\bibinfo{year}{2017}).
\newblock \urlprefix\url{https://doi.org/10.1007/s11071-017-3685-9}.

\bibitem{Haller2016}
\bibinfo{author}{Haller, G.} \& \bibinfo{author}{Ponsioen, S.}
\newblock \bibinfo{title}{Nonlinear normal modes and spectral submanifolds:
  existence, uniqueness and use in model reduction}.
\newblock \emph{\bibinfo{journal}{Nonlinear Dynamics}}
  \textbf{\bibinfo{volume}{86}}, \bibinfo{pages}{1493--1534}
  (\bibinfo{year}{2016}).
\newblock \urlprefix\url{https://doi.org/10.1007/s11071-016-2974-z}.

\bibitem{Ponsioen2018}
\bibinfo{author}{Ponsioen, S.}, \bibinfo{author}{Pedergnana, T.} \&
  \bibinfo{author}{Haller, G.}
\newblock \bibinfo{title}{Automated computation of autonomous spectral
  submanifolds for nonlinear modal analysis}.
\newblock \emph{\bibinfo{journal}{Journal of Sound and Vibration}}
  \textbf{\bibinfo{volume}{420}}, \bibinfo{pages}{269--295}
  (\bibinfo{year}{2018}).

\bibitem{Ponsioen2020}
\bibinfo{author}{Ponsioen, S.}, \bibinfo{author}{Jain, S.} \&
  \bibinfo{author}{Haller, G.}
\newblock \bibinfo{title}{Model reduction to spectral submanifolds and
  forced-response calculation in high-dimensional mechanical systems}.
\newblock \emph{\bibinfo{journal}{Journal of Sound and Vibration}}
  \textbf{\bibinfo{volume}{488}}, \bibinfo{pages}{115640}
  (\bibinfo{year}{2020}).
\newblock
  \urlprefix\url{https://www.sciencedirect.com/science/article/pii/S0022460X20304703}.

\bibitem{Jain2021}
\bibinfo{author}{Jain, S.} \& \bibinfo{author}{Haller, G.}
\newblock \bibinfo{title}{How to compute invariant manifolds and their reduced
  dynamics in high-dimensional finite-element models}.
\newblock \emph{\bibinfo{journal}{Nonlinear Dynamics}}
  \textbf{\bibinfo{volume}{107}}, \bibinfo{pages}{1417–1450}
  (\bibinfo{year}{2022}).

\bibitem{Vizzaccaro2022}
\bibinfo{author}{Vizzaccaro, A.}, \bibinfo{author}{Opreni, A.},
  \bibinfo{author}{Salles, L.}, \bibinfo{author}{Frangi, A.} \&
  \bibinfo{author}{Touzé, C.}
\newblock \bibinfo{title}{High order direct parametrisation of invariant
  manifolds for model order reduction of finite element structures: application
  to large amplitude vibrations and uncovering of a folding point}.
\newblock \emph{\bibinfo{journal}{Nonlinear Dynamics}}
  \textbf{\bibinfo{volume}{110}}, \bibinfo{pages}{525--571}
  (\bibinfo{year}{2022}).

\bibitem{Li2022a}
\bibinfo{author}{Li, M.}, \bibinfo{author}{Jain, S.} \&
  \bibinfo{author}{Haller, G.}
\newblock \bibinfo{title}{Nonlinear analysis of forced mechanical systems with
  internal resonance using spectral submanifolds -- {P}art {I}: Periodic
  response and forced response curve}.
\newblock \emph{\bibinfo{journal}{Nonlinear Dynamics}}
  \textbf{\bibinfo{volume}{110}}, \bibinfo{pages}{1005–1043}
  (\bibinfo{year}{2022}).
\newblock \urlprefix\url{https://doi.org/10.1007/s11071-022-07714-x}.

\bibitem{Li2022b}
\bibinfo{author}{Li, M.}, \bibinfo{author}{Jain, S.} \&
  \bibinfo{author}{Haller, G.}
\newblock \bibinfo{title}{Nonlinear analysis of forced mechanical systems with
  internal resonance using spectral submanifolds -- {P}art {II}: Bifurcation
  and quasi-periodic response}.
\newblock \emph{\bibinfo{journal}{Nonlinear Dynamics}}
  \textbf{\bibinfo{volume}{110}}, \bibinfo{pages}{1045–1080}
  (\bibinfo{year}{2022}).
\newblock \urlprefix\url{https://doi.org/10.1007/s11071-022-07476-6}.

\bibitem{Opreni2023}
\bibinfo{author}{Opreni, A.}, \bibinfo{author}{Vizzaccaro, A.},
  \bibinfo{author}{Touzé, C.} \& \bibinfo{author}{Frangi, A.}
\newblock \bibinfo{title}{High-order direct parametrisation of invariant
  manifolds for model order reduction of finite element structures: application
  to generic forcing terms and parametrically excited systems}.
\newblock \emph{\bibinfo{journal}{Nonlinear Dynamics}}
  \textbf{\bibinfo{volume}{111}}, \bibinfo{pages}{5401--5447}
  (\bibinfo{year}{2023}).
\newblock
  \urlprefix\url{https://link.springer.com/article/10.1007/s11071-022-07978-3}.

\bibitem{Thurnher2023}
\bibinfo{author}{Thurnher, T.}, \bibinfo{author}{Haller, G.} \&
  \bibinfo{author}{Jain, S.}
\newblock \bibinfo{title}{Nonautonomous spectral submanifolds for model
  reduction of nonlinear mechanical systems under parametric resonance}.
\newblock \emph{\bibinfo{journal}{arxiv.org/abs/2307.10240}}
  (\bibinfo{year}{2023}).
\newblock \urlprefix\url{https://doi.org/10.48550/arXiv.2307.10240}.

\bibitem{Cenedese2022a}
\bibinfo{author}{Cenedese, M.}, \bibinfo{author}{Ax\r{a}s, J.},
  \bibinfo{author}{B\"auerlein, B.}, \bibinfo{author}{Avila, K.} \&
  \bibinfo{author}{Haller, G.}
\newblock \bibinfo{title}{Data-driven modeling and prediction of
  non-linearizable dynamics via spectral submanifolds}.
\newblock \emph{\bibinfo{journal}{Nature Communications}}
  \textbf{\bibinfo{volume}{13}}, \bibinfo{pages}{872} (\bibinfo{year}{2022}).

\bibitem{SSMLearn}
\bibinfo{author}{Cenedese, M.}, \bibinfo{author}{Ax\r{a}s, J.} \&
  \bibinfo{author}{Haller, G.}
\newblock \bibinfo{title}{{SSMLearn}} (\bibinfo{year}{2021}).

\bibitem{Axas2022}
\bibinfo{author}{Ax\r{a}s, J.}, \bibinfo{author}{Cenedese, M.} \&
  \bibinfo{author}{Haller, G.}
\newblock \bibinfo{title}{Fast data-driven model reduction for nonlinear
  dynamical systems.}
\newblock \emph{\bibinfo{journal}{Nonlinear Dynamics}}
  \textbf{\bibinfo{volume}{111}}, \bibinfo{pages}{7941–7957}
  (\bibinfo{year}{2023}).
\newblock \urlprefix\url{https://doi.org/10.1007/s11071-022-08014-0}.

\bibitem{Haller2022}
\bibinfo{author}{Haller, G.}, \bibinfo{author}{Jain, S.} \&
  \bibinfo{author}{Cenedese, M.}
\newblock \bibinfo{title}{Dynamics-based machine learning for nonlinearizable
  phenomena}.
\newblock \emph{\bibinfo{journal}{SIAM News}} \textbf{\bibinfo{volume}{55}}
  (\bibinfo{year}{2022}).

\bibitem{Kaszas2022}
\bibinfo{author}{Kasz\'as, B.}, \bibinfo{author}{Cenedese, M.} \&
  \bibinfo{author}{Haller, G.}
\newblock \bibinfo{title}{Dynamics-based machine learning of transitions in
  couette flow}.
\newblock \emph{\bibinfo{journal}{Phys. Rev. Fluids}}
  \textbf{\bibinfo{volume}{7}}, \bibinfo{pages}{L082402}
  (\bibinfo{year}{2022}).
\newblock
  \urlprefix\url{https://link.aps.org/doi/10.1103/PhysRevFluids.7.L082402}.

\bibitem{Haller2023}
\bibinfo{author}{Haller, G.}, \bibinfo{author}{Kaszás, B.},
  \bibinfo{author}{Liu, A.} \& \bibinfo{author}{Axås, J.}
\newblock \bibinfo{title}{{Nonlinear model reduction to fractional and
  mixed-mode spectral submanifolds}}.
\newblock \emph{\bibinfo{journal}{Chaos: An Interdisciplinary Journal of
  Nonlinear Science}} \textbf{\bibinfo{volume}{33}}, \bibinfo{pages}{063138}
  (\bibinfo{year}{2023}).
\newblock \urlprefix\url{https://doi.org/10.1063/5.0143936}.
\newblock
  \eprint{https://pubs.aip.org/aip/cha/article-pdf/doi/10.1063/5.0143936/18206933/063138\_1\_5.0143936.pdf}.

\bibitem{Mahlknecht2022}
\bibinfo{author}{Mahlknecht, F.} \emph{et~al.}
\newblock \bibinfo{title}{Using spectral submanifolds for nonlinear periodic
  control}.
\newblock In \emph{\bibinfo{booktitle}{2022 IEEE 61st Conference on Decision
  and Control (CDC)}}, \bibinfo{pages}{6548--6555} (\bibinfo{year}{2022}).

\bibitem{Alora2023}
\bibinfo{author}{Alora, J.}, \bibinfo{author}{Cenedese, M.},
  \bibinfo{author}{Schmerling, E.}, \bibinfo{author}{Haller, G.} \&
  \bibinfo{author}{Pavone, M.}
\newblock \bibinfo{title}{Data-driven spectral submanifold reduction for
  nonlinear optimal control of high-dimensional robots}.
\newblock In \emph{\bibinfo{booktitle}{2023 IEEE International Conference on
  Robotics and Automation (ICRA)}}, \bibinfo{pages}{2627--2633}
  (\bibinfo{year}{2023}).

\bibitem{Breunung2018}
\bibinfo{author}{Breunung, T.} \& \bibinfo{author}{Haller, G.}
\newblock \bibinfo{title}{Explicit backbone curves from spectral submanifolds
  of forced-damped nonlinear mechanical systems}.
\newblock \emph{\bibinfo{journal}{Proceedings of the Royal Society of London A:
  Mathematical, Physical and Engineering Sciences}}
  \textbf{\bibinfo{volume}{474}}, \bibinfo{pages}{20180083}
  (\bibinfo{year}{2018}).

\bibitem{Cenedese2022b}
\bibinfo{author}{Cenedese, M.}, \bibinfo{author}{Ax\r{a}s, J.},
  \bibinfo{author}{Haocheng, Y.}, \bibinfo{author}{Eriten, M.} \&
  \bibinfo{author}{Haller, G.}
\newblock \bibinfo{title}{Data-driven nonlinear model reduction to spectral
  submanifolds in mechanical systems}.
\newblock \emph{\bibinfo{journal}{Philosophical Transactions of the Royal
  Society A: Mathematical, Physical and Engineering Sciences}}
  \textbf{\bibinfo{volume}{380}}, \bibinfo{pages}{20210194}
  (\bibinfo{year}{2022}).

\bibitem{Ponsioen2019}
\bibinfo{author}{Ponsioen, S.}, \bibinfo{author}{Pedergnana, T.} \&
  \bibinfo{author}{Haller, G.}
\newblock \bibinfo{title}{Analytic prediction of isolated forced response
  curves from spectral submanifolds}.
\newblock \emph{\bibinfo{journal}{Nonlinear Dynamics}}
  \textbf{\bibinfo{volume}{98}}, \bibinfo{pages}{2755--2773}
  (\bibinfo{year}{2019}).

\bibitem{Shaw1993}
\bibinfo{author}{Shaw, S.} \& \bibinfo{author}{Pierre, C.}
\newblock \bibinfo{title}{Normal modes for non-linear vibratory systems}.
\newblock \emph{\bibinfo{journal}{J. Sound and Vibration}}
  \textbf{\bibinfo{volume}{164}}, \bibinfo{pages}{85--124}
  (\bibinfo{year}{1993}).

\bibitem{GH1983}
\bibinfo{author}{Guckenheimer, J.} \& \bibinfo{author}{Holmes, P.}
\newblock \emph{\bibinfo{title}{Nonlinear Oscillations, Dynamical Systems, and
  Bifurcations of Vector Fields}}, vol.~\bibinfo{volume}{42} of
  \emph{\bibinfo{series}{Applied Mathematical Sciences}}
  (\bibinfo{publisher}{Springer-Verlag New York}, \bibinfo{year}{1983}).

\bibitem{Murdock2003}
\bibinfo{author}{Murdock, J.}
\newblock \emph{\bibinfo{title}{Normal Forms and Unfoldings for Local Dynamical
  Systems}}.
\newblock Springer Monographs in Mathematics
  (\bibinfo{publisher}{Springer-Verlag New York}, \bibinfo{year}{2003}).

\bibitem{Poincare1892}
\bibinfo{author}{Poincaré, H.}
\newblock \emph{\bibinfo{title}{Les Méthodes Nouvelles de la Mécanique
  Céleste}} (\bibinfo{publisher}{Gauthier-Villars et Fils, Paris},
  \bibinfo{year}{1892}).

\bibitem{SSMTool2021}
\bibinfo{author}{Jain, S.}, \bibinfo{author}{Thurnher, T.},
  \bibinfo{author}{Li, M.} \& \bibinfo{author}{Haller, G.}
\newblock \bibinfo{title}{{SSMTool-2.0: Computation of invariant manifolds \&
  their reduced dynamics in high-dimensional mechanics problems}}
  (\bibinfo{year}{2021}).
\newblock \urlprefix\url{https://doi.org/10.5281/zenodo.4614202}.

\bibitem{Szalai2017}
\bibinfo{author}{Szalai, R.}, \bibinfo{author}{Ehrhardt, D.} \&
  \bibinfo{author}{Haller, G.}
\newblock \bibinfo{title}{Nonlinear model identification and spectral
  submanifolds for multi-degree-of-freedom mechanical vibrations}.
\newblock \emph{\bibinfo{journal}{Proceedings of the Royal Society of London A:
  Mathematical, Physical and Engineering Sciences}}
  \textbf{\bibinfo{volume}{473}}, \bibinfo{pages}{20160759}
  (\bibinfo{year}{2017}).
\newblock \urlprefix\url{http://doi.org/10.1098/rspa.2016.0759}.

\bibitem{Jin2020}
\bibinfo{author}{Jin, M.}, \bibinfo{author}{Chen, W.}, \bibinfo{author}{Brake,
  M.} \& \bibinfo{author}{Song, H.}
\newblock \bibinfo{title}{Identification of instantaneous frequency and damping
  from transient decay data}.
\newblock \emph{\bibinfo{journal}{Journal of Vibration and Acoustics}}
  \textbf{\bibinfo{volume}{142}}, \bibinfo{pages}{051111}
  (\bibinfo{year}{2020}).

\bibitem{Jain2018}
\bibinfo{author}{Jain, S.}, \bibinfo{author}{Tiso, P.} \&
  \bibinfo{author}{Haller, G.}
\newblock \bibinfo{title}{Exact nonlinear model reduction for a von {K}ármán
  beam: slow-fast decomposition and spectral submanifolds}.
\newblock \emph{\bibinfo{journal}{Journal of Sound and Vibration}}
  \textbf{\bibinfo{volume}{423}}, \bibinfo{pages}{195--211}
  (\bibinfo{year}{2018}).
\newblock
  \urlprefix\url{https://www.sciencedirect.com/science/article/pii/S0022460X18300713}.

\bibitem{Nayfeh1974}
\bibinfo{author}{Nayfeh, A.}, \bibinfo{author}{Mook, D.} \&
  \bibinfo{author}{Sridhar, S.}
\newblock \bibinfo{title}{Nonlinear analysis of the forced response of
  structural elements}.
\newblock \emph{\bibinfo{journal}{Journal of the Acoustical Society of
  America}} \textbf{\bibinfo{volume}{55}}, \bibinfo{pages}{281--291}
  (\bibinfo{year}{1974}).

\bibitem{Jain2018b}
\bibinfo{author}{Jain, S.} \& \bibinfo{author}{Tiso, P.}
\newblock \bibinfo{title}{Simulation-free hyper-reduction for geometrically
  nonlinear structural dynamics: a quadratic manifold lifting approach}.
\newblock \emph{\bibinfo{journal}{Journal of Computational and Nonlinear
  Dynamics}} \textbf{\bibinfo{volume}{13}}, \bibinfo{pages}{071003}
  (\bibinfo{year}{2018}).

\bibitem{YetAnotherFEcode2020}
\bibinfo{author}{Jain, S.}, \bibinfo{author}{Marconi, J.} \&
  \bibinfo{author}{Tiso, P.}
\newblock \bibinfo{title}{Yetanotherfecode} (\bibinfo{year}{2020}).
\newblock \urlprefix\url{https://doi.org/10.5281/zenodo.4011281}.

\bibitem{Marconi2021}
\bibinfo{author}{Marconi, J.}, \bibinfo{author}{Bonaccorsi, G.},
  \bibinfo{author}{Giannini, D.}, \bibinfo{author}{Falorni, L.} \&
  \bibinfo{author}{Braghin, F.}
\newblock \bibinfo{title}{Exploiting nonlinearities for frequency-matched mems
  gyroscopes tuning}.
\newblock \bibinfo{pages}{1--4} (\bibinfo{publisher}{IEEE},
  \bibinfo{year}{2021}).
\newblock \urlprefix\url{https://ieeexplore.ieee.org/document/9430478/}.

\end{thebibliography}
\end{document}